\newtheorem{thm}{Theorem}[section]
\newaliascnt{theo}{thm}
\newtheorem{theo}[theo]{Theorem}
\newaliascnt{cor}{thm}
\newtheorem{cor}[cor]{Corollary}
\newaliascnt{prop}{thm}
\newtheorem{prop}[prop]{Proposition}
\newaliascnt{defn}{thm}
\newtheorem{defn}[defn]{Definition}
\newaliascnt{lem}{thm}
\newtheorem{lem}[lem]{Lemma}
\newaliascnt{conj}{thm}
\newtheorem{conj}[conj]{Conjecture}
\newaliascnt{que}{thm}
\newaliascnt{ass}{thm}
\theoremstyle{remark}
\newaliascnt{rem}{thm}
\newtheorem{rem}[rem]{Remark}
\theoremstyle{definition}
\newtheorem{notn}[thm]{Notation}
\newcommand{\Z}{\mathbb{Z}\xspace}
\newcommand{\Q}{\mathbb{Q}\xspace}
\newcommand{\G}{\mathbb{G}\xspace}
\DeclareMathOperator{\Spec}{Spec}
\DeclareMathOperator{\res}{res}
\DeclareMathOperator{\Tr}{Tr}
\DeclareMathOperator{\ord}{ord}
\DeclareMathOperator{\dv}{div}
\DeclareMathOperator{\alb}{alb}
\DeclareMathOperator{\Hom}{Hom}
\DeclareMathOperator{\Alb}{Alb}
\DeclareMathOperator{\Tor}{Tor}
\DeclareMathOperator{\sing}{sing}
\DeclareMathOperator{\geo}{geo}
\DeclareMathOperator{\DM}{DM}
\DeclareMathOperator{\eff}{eff}
\let\c@equation\c@thm
\numberwithin{equation}{section}
\title{Some results about zero-cycles on abelian and semi-abelian varieties}
\author[*]{Evangelia Gazaki*} \address[*]{\normalfont Department of Mathematics, University of Michigan, 3823 East Hall, 530 Church St., Ann Arbor, MI, 48109, USA. Email: \texttt{gazaki@umich.edu}}
\begin{document}
\maketitle
\begin{abstract} In this short note we extend some results obtained in \cite{Gazaki2015}. First, we prove that for an abelian variety $A$ with good ordinary reduction over a finite extension of $\Q_p$ with $p$ an odd prime, the Albanese kernel of $A$ is the direct sum of its maximal divisible subgroup and a torsion group. Second, for a semi-abelian variety $G$ over a perfect field $k$, we construct a decreasing integral filtration $\{F^r\}_{r\geq 0}$ of Suslin's singular homology group, $H_0^{\sing}(G)$, such that the successive quotients are isomorphic to a certain Somekawa K-group.  

\end{abstract}

\section{Introduction}

For a smooth projective and geometrically integral variety $X$ over a field $k$, the group $CH_0(X)$ of zero-cycles modulo rational equivalence has a filtration \[CH_0(X)\supset A_0(X)\supset T(X)\supset 0,\] where 
$A_0(X)=\ker(CH_0(X)\xrightarrow{\deg}\Z)$ is the kernel of the degree map, while
\[T(X)=\ker(A_0(X)\xrightarrow{\alb_X}\Alb_X(k))\] is the kernel of the Albanese map, that is the higher dimensional analogue of the Abel-Jacobi map of curves. The Albanese kernel, $T(X)$, is the most mysterious part of $CH_0(X)$ and is in general very hard to compute.  Depending on the nature of the base field $k$ there are numerous conjectures concerning the structure of the Albanese kernel. For example, when $k$ is an algebraic number field, the famous Bloch-Beilinson conjectures (\cite{Bloch2000}, \cite{Beilinson1984}) predict that $T(X)$ is a finite group. 

The Beilinson conjectures in particular lie on the deep philosophy that for a smooth projective variety $X$, all Chow groups, $CH_i(X)$, should have an integral filtration arising from a spectral sequence in the conjectural category of mixed motives. 

In \cite{Gazaki2015} we constructed a candidate for such an integral filtration $\{F^r\}_{r\geq 0}$ for the Chow group $CH_0(A)$ of zero-cycles on an abelian variety $A$ over a field $k$. This decreasing filtration has the property that for every $r\geq 0$, there is an isomorphism 
\begin{eqnarray}\label{isoab}F^r/F^{r+1}\otimes\Z\left[\frac{1}{r!}\right]\simeq S_r(k;A)\otimes\Z\left[\frac{1}{r!}\right],\end{eqnarray} where $S_r(k;A)$ is the symmetric quotient of the Somekawa K-group $K(k;A,\ldots,A)$ attached to $r$ copies of $A$. In many cases, for example when the base field is algebraically closed, the $\otimes\Z[1/r!]$ can be omitted in the above isomorphism. We note that after $\otimes\Q$ this filtration coincides with the motivic filtration previously studied by Bloch (\cite{Bloch1976}) and Beauville (\cite{Beauville1986}) (see \autoref{BlochBeauville} for more details).

 In the present article we extend the above work (\cite{Gazaki2015}) in two different directions. 

\subsection{Abelian varieties over $p$-adic fields}
In the first part of the paper, we focus on the case of an abelian variety $A$ over a finite extension of the $p$-adic field $\Q_p$. In the case when $A$ has good ordinary reduction, we can say more about the quotients $F^r/F^{r+1}$ of the above filtration and about the Albanese kernel $T(A)$. In \autoref{section 2} we prove the following theorem. 

 \begin{theo}\label{main1} Let $k$ be a finite extension of the $p$-adic field $\Q_p$ with $p\geq 3$. Let $A$ be an abelian variety over $k$ with good ordinary reduction. The Albanese kernel, $T(A)$, of $A$ is the direct sum of its maximal divisible subgroup and a torsion group. 
\end{theo}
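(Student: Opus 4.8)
The plan is to reduce the statement about $T(A)$ to a statement about the Somekawa $K$-groups $S_r(k;A)$ via the filtration $\{F^r\}$ from \cite{Gazaki2015}, and then to analyze those $K$-groups using the good ordinary reduction hypothesis. First I would recall that, over a $p$-adic field, the Albanese kernel $T(A)$ agrees (up to the subtleties of the Albanese map) with the piece $F^2$ of the filtration, so it suffices to understand $F^2/F^3$, $F^3/F^4$, and so on, together with the question of whether the filtration is eventually stationary modulo divisible and torsion subgroups. By the isomorphism \eqref{isoab}, up to bounded torsion each graded piece $F^r/F^{r+1}$ is controlled by $S_r(k;A)$, so the crux becomes: show that each $S_r(k;A)$ for $r\geq 2$ is the sum of a divisible group and a torsion group, and that the filtration stabilizes in the relevant sense.

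**The main input: Mattuck/Lutz-type structure and Bloch–Kato–Saito–Sato results.**

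The key arithmetic input is that for an abelian variety $A$ over a $p$-adic field $k$, the group $A(k)$ is, by Mattuck's theorem, an extension of a finite group by $\mathcal{O}_k^{\dim A}$, hence after inverting $p$ it is "almost divisible"; and the good ordinary reduction hypothesis with $p\geq 3$ is exactly what is needed to control the $p$-part, via the connected-étale sequence of the $p$-divisible group and the fact (going back to work of Bloch–Kato, and used in this context by Raskind–Spiess and by the author in \cite{Gazaki2015}) that the $p$-adic completion of $A(k)$ behaves well. Concretely, I would show that $S_r(k;A)$ is generated by symbols $\{a_1,\dots,a_r\}_{k'/k}$, use the projection-formula and Weil-reciprocity relations to move the symbols around, and exploit divisibility of $A(k')$ away from a controlled set of primes to conclude that $S_r(k;A)\otimes\Z[1/p]$ is divisible, while the $p$-part is handled separately using the ordinary reduction. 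The torsion that survives is absorbed into the "torsion group" summand, and the divisible part into the maximal divisible subgroup, after invoking the general fact that an abelian group which is an extension of a torsion group by a divisible group — or more precisely one whose reduced quotient is torsion — splits as (maximal divisible) $\oplus$ (torsion), provided one checks the relevant $\mathrm{Ext}$ vanishing (divisible groups are injective over $\Z$, so such extensions split automatically).

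**Assembling the pieces.**

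So the steps, in order, are: (1) identify $T(A)$ with $F^2(A)$ and reduce to the graded pieces; (2) via \eqref{isoab}, reduce each graded piece, up to $r!$-torsion, to $S_r(k;A)$; (3) prove that $S_r(k;A)$ is, away from $p$, divisible — here one uses that $A(k')$ is divisible-by-finite for every finite $k'/k$ and the multilinearity of the Somekawa symbol; (4) handle the $p$-primary part using good ordinary reduction, reducing modulo the maximal ideal and using that the formal-group part $\widehat{A}(\mathfrak{m})$ is a $\Z_p$-module of finite rank on which the relevant $K$-group is controlled, so that the $p$-part of $S_r(k;A)$ is the sum of a divisible group and a finite group; (5) combine to see each $F^r/F^{r+1}$ is (divisible) $\oplus$ (torsion), then run an induction up the filtration — using that divisibility and "torsion" are preserved under extensions once the relevant $\mathrm{Ext}^1$ into a divisible group vanishes — and finally use a boundedness/finiteness statement to ensure the filtration contributes only finitely many nonzero graded pieces modulo divisibles, so that $T(A)=F^2(A)$ itself is (maximal divisible) $\oplus$ (torsion).

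**Expected obstacle.**

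I expect the main obstacle to be step (4): controlling the $p$-primary part of the higher Somekawa $K$-groups $S_r(k;A)$ for $r\geq 2$. The away-from-$p$ argument is essentially formal from divisibility of $A(k')$, but at $p$ the group $A(k')$ has a genuine free $\Z_p$-part coming from the formal group, and one must show that the Somekawa relations force the corresponding part of $S_r(k;A)$ to collapse into something divisible-plus-finite. This is precisely where the good ordinary reduction and the hypothesis $p\geq 3$ enter — ordinarity gives a clean splitting of the $p$-divisible group into multiplicative and étale parts, and $p\geq 3$ avoids the pathologies of wild ramification in the $2$-adic case — and I would expect to lean on a Bloch–Kato style computation of $K$-groups of local fields with coefficients, or alternatively on the Mackey-functor formalism of Somekawa $K$-groups together with a norm/corestriction argument, to push this through.
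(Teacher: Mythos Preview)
Your overall strategy matches the paper's: identify $T(A)=F^2$, control the graded pieces $F^r/F^{r+1}$ via the Somekawa groups $S_r(k;A)$, and assemble. There are, however, two points where your outline is genuinely incomplete compared to the actual argument.

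\textbf{The termination input is missing.} Your step (5) ends with ``use a boundedness/finiteness statement to ensure the filtration contributes only finitely many nonzero graded pieces modulo divisibles,'' but you never say what that statement is. In the paper this is the concrete fact that $F^{d+1}$ is torsion (\autoref{alpha}), which follows from the Bloch--Beauville result $I^{d+1}\otimes\Q=0$ together with $F^{d+1}\otimes\Q=I^{d+1}\otimes\Q$. This is not a minor bookkeeping point: it is the base of the induction. Without it you have an infinite tower of graded pieces and no way to conclude anything about $F^2$ itself. Your ``Expected obstacle'' section flags step (4) as the hard part, but in fact the structure of $S_r(k;A)$ is available off the shelf (Raskind--Spiess, \cite[Theorem 4.5]{Raskind/Spiess2000}); the piece you have not located is this termination.

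\textbf{The graded pieces are sharper than you state, and this matters for the assembly.} You aim to show each $S_r(k;A)$ is ``divisible $\oplus$ torsion'' and then run an extension argument. The paper uses the stronger Raskind--Spiess input: for $r\geq 3$ the group $S_r(k;A)$ is \emph{divisible} (no finite or torsion piece), and for $r=2$ it is divisible $\oplus$ finite. Two consequences follow. First, since the image of the injection $\Phi_r:F^r/F^{r+1}\hookrightarrow S_r(k;A)$ contains $r!\,S_r(k;A)$, divisibility of $S_r$ for $r\geq 3$ (and $2$-divisibility of $S_2$ when $p>2$) forces $\Phi_r$ to be an \emph{isomorphism} on the nose---no ``$r!$-torsion fudge'' is needed. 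Second, with $F^r/F^{r+1}$ genuinely divisible for $3\leq r\leq d$, the quotient $F^3/F^{d+1}$ is divisible, hence injective, and the paper's short diagram chase with $\Tor(F^2)$ finishes immediately. Your looser formulation (``divisible $\oplus$ torsion'' at every level, then preserve under extensions) can be made to work, but it requires an extra argument that an extension of a group of the form (divisible $\oplus$ torsion) by another such group is again of that form; your parenthetical about $\mathrm{Ext}^1$ vanishing is not quite the right justification, since the torsion pieces need not be divisible and hence need not be injective.

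In short: cite Raskind--Spiess rather than re-deriving it, and supply the missing input $F^{d+1}\subset\Tor(F^2)$ from Bloch--Beauville; then your outline becomes the paper's proof.
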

\autoref{main1} can be thought of as a promising step towards the following conjecture of Colliot-Th\'{e}l\`{e}ne (\cite{Colliot-Thelene1995}).


\begin{conj}\label{CTconj} Let $X$ be a smooth projective variety over a finite extension $k$ of $\Q_p$. The Albanese kernel $T(X)$ is a direct sum of its maximal divisible subgroup with a finite group. 
\end{conj}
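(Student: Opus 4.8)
The plan is to separate \autoref{CTconj} into a purely formal part and the genuine arithmetic part. The formal part is the splitting itself: a divisible abelian group is an injective $\Z$-module, so if $D\subset T(X)$ denotes the maximal divisible subgroup, the identity of $D$ extends along $D\hookrightarrow T(X)$ to a retraction, giving
\[
T(X)\;\simeq\;D\,\oplus\,\bigl(T(X)/D\bigr).
\]
Hence \autoref{CTconj} is equivalent to the single assertion that the reduced quotient $T(X)/D$ is \emph{finite}, and all of the difficulty lives there. This already isolates the gap between \autoref{main1} and the conjecture: \autoref{main1} gives, for an abelian variety of good ordinary reduction, that $T(A)/D$ is \emph{torsion}, and since a finitely generated torsion abelian group is finite, what must be added is a finite generation statement (equivalently, a bounded exponent together with finite corank).

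For the finiteness of $T(X)/D$ I would work through higher-dimensional local class field theory together with $p$-adic Hodge theory. By the reciprocity map of Kato--Saito, $CH_0(X)$ maps to $\pi_1^{ab}(X)$ with behaviour governed by the reduction, and combined with the finiteness of $CH_0(X)/n$ (via the Bloch--Kato description of \'etale motivic cohomology) this should show that $T(X)/n$ is finite and bounded once $n$ is divisible by a fixed integer. The prime-to-$p$ part of the torsion is then controlled by $\ell$-adic cycle class maps, while the $p$-primary part is controlled by syntomic regulators and the comparison isomorphisms of $p$-adic Hodge theory; a uniform bound on the image of the total cycle class map of $X$ would force $T(X)/D$ to be finite. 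In this programme the role of \autoref{main1} is to provide the divisibility of the kernel of these cycle class maps in the abelian, good ordinary case, to be propagated by \'etale descent.

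To reduce a general $X$ to the abelian-variety computations behind \autoref{main1} and the filtration \eqref{isoab}, I would use a decomposition of the diagonal: when $A_0(X)$ is governed by $\Alb_X$, the group $T(X)$ becomes a retract of $T(\Alb_X)\simeq F^2$, whose successive quotients are the Somekawa groups $S_r(k;\Alb_X)$ for $r\ge 2$ by \eqref{isoab}. Proving that $\bigoplus_{r\ge 2}S_r(k;\Alb_X)$ is finite modulo divisibility --- in particular that the filtration is essentially stationary and each graded piece has finite reduced quotient --- would settle the abelian case with the sharp \emph{finite} conclusion. The two remaining reductions, from good ordinary to arbitrary reduction and from $T(A)$ to the $S_r(k;A)$, I would attack with de Jong's alterations and the weight spectral sequence in $p$-adic \'etale cohomology, and with the class field theory input above, respectively.

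The main obstacle is the transcendental part of $X$. As soon as $\dim X\ge 2$ the motive of $X$ is not governed by $\Alb_X$, so $T(X)$ records information --- morally the $(2,0)$-part of $H^2$ and its higher analogues --- invisible to any abelian variety, and it is precisely here that the passage from \emph{torsion} to \emph{finite} is delicate: one must exclude an infinite reduced quotient, for instance a copy of $\Z_p$ or an unbounded torsion group, which requires a uniform finiteness of the image of the total $p$-adic cycle class map that is at present available only under reduction hypotheses. This is why the conjecture remains open in general, and why \autoref{main1} stops at the abelian, good ordinary, torsion statement.
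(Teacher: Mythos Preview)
The statement you were asked to address is \autoref{CTconj}, which is an \emph{open conjecture} of Colliot-Th\'el\`ene: the paper does not prove it, and indeed presents \autoref{main1} only as ``a promising step towards'' it. So there is no proof in the paper to compare your attempt against. You clearly recognize this, since your text is written as a research programme (``I would work through\ldots'', ``I would attack with\ldots'') and ends by explaining why the conjecture remains open. In that sense your submission is not a proof at all, and you say so yourself.

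As an outline, your formal reduction is correct: divisible abelian groups are injective $\Z$-modules, so $T(X)\simeq D\oplus T(X)/D$ always, and the content of \autoref{CTconj} is exactly the finiteness of $T(X)/D$; likewise your reading of \autoref{main1} as giving only that $T(A)/D$ is torsion is accurate. One step in your programme should be flagged, however. You write that via a decomposition of the diagonal, ``when $A_0(X)$ is governed by $\Alb_X$, the group $T(X)$ becomes a retract of $T(\Alb_X)$''. The Albanese morphism gives a map $T(X)\to T(\Alb_X)$, but a splitting back requires a Bloch--Srinivas type decomposition, which is available precisely when $CH_0$ of the geometric generic fibre is representable --- i.e.\ when the transcendental obstruction you identify in your final paragraph is already absent. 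So this reduction is circular as stated and does not furnish an independent route to the general case. The honest summary is the one you give at the end: the passage from \emph{torsion} to \emph{finite} is exactly the open part, and nothing in your sketch closes it.
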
 

A weaker form of the above conjecture has been established by S. Saito and K. Sato (\cite{Saito/Sato2010}), who proved that the degree zero subgroup $A_0(X)$ is the direct sum of a finite group and a group  that is divisible by any integer $m$ coprime to $p$. Given that the maximal divisible subgroups of $A_0(X)$ and $T(X)$ coincide, the remaining questions include whether the quotient $T(X)/p^n$ is finite for every $n\geq 1$, and whether it stabilizes for large $n>0$. There are only very few results in this direction, including certain classes of rationally connected varieties (\cite{CT2005}) and  certain products of curves, $X=C_1\times\cdots\times C_d$ (\cite{Raskind/Spiess2000}, \cite{Gazaki/Leal2018}). 
 
 Unfortunately we are still not able to give more precise information about the structure of the torsion summand of $T(A)$ in \autoref{main1}, but we provide some indication that it could be of bounded exponent (see \autoref{exponent}). 
 \vspace{2pt}
 \subsection{Semi-abelian varieties and Suslin's singular homology}
The second goal of the paper is to extend the isomorphism \eqref{isoab} to semi-abelian varieties.  A semi-abelian variety $G$ over a field $k$ is an extension of an abelian variety by a torus; in particular, it is a quasi-projective variety. For such open varieties the study of $CH_0$ is often replaced by the study of a larger class group of zero-cycles, namely of Suslin's singular homology group, $H_0^{\sing}$. 

In \autoref{section 3}, imitating the method of \cite{Gazaki2015}, we construct, for a semi-abelian variety $G$ over a perfect field $k$, a decreasing filtration,
$F^0\supset F^1\supset\cdots\supset F^r\supset\cdots$ of $H_0^{\sing}(G)$ such that the successive quotients $F^r/F^{r+1}$ can be understood by the Somekawa K-group, $S_r(k;G)$. Namely, we prove the following theorem. 
\begin{theo}\label{isosemiab} Let $G$ be a semi-abelian variety over a perfect field $k$. There is a decreasing filtration $\{F^r\}_{r\geq 0}$ of Suslin's singular homology group, $H_0^{\sing}(G)$, such that for every $r\geq 0$ we have an isomorphism,
\[F^r/F^{r+1}\otimes\Z\left[\frac{1}{r!}\right]\simeq S_r(k;G)\otimes\Z\left[\frac{1}{r!}\right].\]  \end{theo}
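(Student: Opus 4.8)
The plan is to transplant, essentially verbatim, the construction of the filtration $\{F^r\}_{r\ge0}$ on $CH_0$ of an abelian variety from \cite{Gazaki2015} to Suslin's singular homology of a semi-abelian variety, working inside Voevodsky's derived category $\DM(k)$, where $H_0^{\sing}(G)=\Hom_{\DM(k)}(\Z,M(G))$. The identity section $\Spec k\to G$ splits the structure map, giving a canonical decomposition $H_0^{\sing}(G)=\Z\oplus\widetilde H_0^{\sing}(G)$ with $\widetilde H_0^{\sing}(G)=\ker(\deg)$; the group law $\mu\colon G\times G\to G$ induces a commutative associative Pontryagin product $\ast$ on $H_0^{\sing}(G)$ for which $\deg$ is a ring homomorphism, so $\widetilde H_0^{\sing}(G)$ is an ideal; and for every finite (hence separable, $k$ being perfect) extension $L/k$ there is a norm $N_{L/k}\colon H_0^{\sing}(G_L)\to H_0^{\sing}(G)$, compatible with $\ast$ via the projection formula. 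I then set $F^0=H_0^{\sing}(G)$ and, for $r\ge1$, let $F^r$ be the subgroup generated by all $N_{L/k}(\alpha_1\ast\cdots\ast\alpha_r)$ with $L/k$ finite and $\alpha_i\in\widetilde H_0^{\sing}(G_L)$; since $F^a\ast F^b\subseteq F^{a+b}$ (by the projection formula), this is a decreasing filtration with $F^1=\widetilde H_0^{\sing}(G)$.

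Next I construct a surjection $\Phi_r\colon S_r(k;G)\twoheadrightarrow F^r/F^{r+1}$. On a generating symbol it is $\Phi_r(\{g_1,\dots,g_r\}_{L/k})=N_{L/k}\big(([g_1]-[0])\ast\cdots\ast([g_r]-[0])\big)\bmod F^{r+1}$, equivalently the pushforward of the $r$-th finite difference $\sum_{S\subseteq\{1,\dots,r\}}(-1)^{r-|S|}[\sum_{i\in S}g_i]$. $\Z$-multilinearity and the projection-formula relations are immediate; factoring through the symmetric quotient is clear since $\mu_r$ is $\mathfrak S_r$-invariant; surjectivity holds because $\widetilde H_0^{\sing}(G_L)$ is generated by classes $N_{L'/L}([g]-[0])$ and the projection formula allows one to bring all the $\alpha_i$ over a common field. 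The one substantive point is that $\Phi_r$ kills the Weil reciprocity relations of $S_r(k;G)$: a reciprocity datum $(C,f,g_1,\dots,g_r)$ over $L$ --- a smooth projective curve $C$, $f\in L(C)^\times$, morphisms $g_i\colon U\to G$ on an open $U\supseteq\supp(\dv f)$ --- yields, after composing $(g_1,\dots,g_r)\colon U\to G^r$ with $\mu_r$, pairing with $f$, and normalizing at the poles of $f$ and the indeterminacy loci, a finite correspondence in $\Cor(\mathbb A^1,G)$ whose value at $0$ minus its value at $1$ is precisely the zero-cycle of the reciprocity relation, which is therefore null in $H_0^{\sing}(G)$. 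Alternatively, one may take $\Phi_r$ to be the composite of a comparison isomorphism $K(k;G,\dots,G)\cong\Hom_{\DM}(\Z,\mathcal M_1(G)^{\otimes r})$ (Kahn--Yamazaki), the inclusion of the summand $\mathcal M_1(G)^{\otimes r}\hookrightarrow M(G^{r})$, and $M(\mu_r)_{\ast}$, which makes well-definedness automatic.

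It remains to produce, after inverting $r!$, a left inverse $\Psi_r\colon F^r/F^{r+1}\to S_r(k;G)\otimes\Z[1/r!]$ of $\Phi_r$, i.e. with $\Psi_r\circ\Phi_r$ the canonical localization map; then $\Phi_r\otimes\Z[1/r!]$ is an isomorphism and we are done. Following \cite{Gazaki2015}, I would first prove that $[n]_{\ast}$ preserves $F^\bullet$ and acts on $F^r/F^{r+1}$ as multiplication by $n^r$, reducing --- via $[n]\circ\mu_r=\mu_r\circ[n]^{\times r}$ and $F^2\ast F^{r-1}\subseteq F^{r+1}$ --- to the case $r=1$, namely $[n]_{\ast}\alpha\equiv n\alpha\pmod{F^2}$ for $\alpha\in\widetilde H_0^{\sing}(G)$. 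Then $\Psi_r$ is built from the $r$-fold diagonal $\Delta\colon G\to G^{r}$, the projection of $H_0^{\sing}(G^{r})$ onto the summand coming from $\mathcal M_1(G)^{\otimes r}$, and the symmetrizing idempotent $e_r=\frac1{r!}\sum_{\sigma\in\mathfrak S_r}\sigma$; it is exactly this idempotent that forces the $\otimes\Z[1/r!]$, and one checks on generators that $\Psi_r\circ\Phi_r$ is a unit of $\Z[1/r!]$ using $\mu_r\circ\Delta=[r]$ together with the $n^r$-lemma. Under the hood this step relies on the decomposition $M(G)\cong\bigoplus_{i\ge0}\Sym^i\mathcal M_1(G)$ of the motive of a semi-abelian variety (integrally when $\mathrm{char}\,k=0$, and after inverting $r!$ in general), which is also what shows that $F^\bullet$ agrees with the weight filtration of $M(G)$ and hence, as recorded after \eqref{isoab}, with the classical motivic filtration rationally.

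The main obstacle is the reciprocity step --- in both the well-definedness of $\Phi_r$ and the bookkeeping for $\Psi_r$ --- now carried out for the \emph{non-proper} $G$: a curve computing rational equivalence on $G$ need not sit inside a smooth compactification, so one cannot simply invoke divisors of rational functions on complete curves as in \cite{Gazaki2015} and \cite{Raskind/Spiess2000}, and the Weil reciprocity relations of $S_r(k;G)$ must be realized inside the Suslin complex, by genuine finite correspondences $\Cor(\mathbb A^1,G)$, with care at the indeterminacy points. A secondary task is to check compatibility with the two extremes: $G$ a torus, where $S_r(k;G)$ is a quotient of Milnor $K$-theory and both sides are computable directly, and $G$ abelian, where one must recover \eqref{isoab} and \cite{Gazaki2015}; the exact sequence $1\to T\to G\to A\to1$, with the induced exact sequences for $\mathcal M_1$, for $H_0^{\sing}$, and for the Somekawa groups, furnishes both a consistency check and the skeleton of an alternative d\'{e}vissage argument.
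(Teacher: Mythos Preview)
Your route is viable but differs from the paper's in three linked ways. You work inside $\DM(k)$, take $F^r$ to be the Pontryagin filtration $I^r$, build the surjection $S_r(k;G)\twoheadrightarrow F^r/F^{r+1}$ first, and split it using the diagonal, the motivic decomposition $M(G)\simeq\bigoplus_i\Sym^i\mathcal{M}_1(G)$, and a symmetrizing idempotent. The paper never enters $\DM$: it replaces $H_0^{\sing}(G)$ by Wiesend's tame class group $W(G)$ (isomorphic by Schmidt over a perfect field) and $K(k;G,\dots,G)$ by Kahn--Yamazaki's \emph{geometric} variant $K^{\geo}$, whose Weil-reciprocity relation is literally ``$\dv(f)$ with $f\equiv 1$ on $\overline C\setminus C$''. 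It then defines an explicit integral map $\Phi_r:W(G)\to S_r^{\geo}(k;G)$, $[x]\mapsto\{x,\dots,x\}_{k(x)/k}$, and sets $F^r=\bigcap_{j<r}\ker\Phi_j$ (so $I^r\subset F^r$, in general strictly); thus your $\Phi_r,\Psi_r$ are the paper's $\Psi_r,\Phi_r$. The paper's $\Psi_r$ (your finite-difference map) is well defined by covariant functoriality of $W$ applied to the open-curve morphism $C_0\to G$, and $\Phi_r\circ\Psi_r=r!$ is a direct combinatorial count on $\omega_{x_1,\dots,x_r}$, with no $[n]_\ast$-eigenspace or idempotent arguments. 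In short, the ``main obstacle'' you flag---realizing reciprocity on a non-proper $G$ via genuine finite correspondences in $\Cor(\mathbb{A}^1,G)$---dissolves in the paper's setup because $W(G)$ and $K^{\geo}$ carry matching relations by design. Your approach buys a conceptual identification of $F^\bullet$ with the weight filtration of $M(G)$, at the cost of importing the Kahn--Yamazaki $\Hom$-description and the motivic splitting of $M(G)$; note also that your verification of $\Psi_r\circ\Phi_r$ via ``$\mu_r\circ\Delta=[r]$'' needs care, since the composite you actually meet is $\Delta_\ast\circ\mu_{r\ast}$, not $(\mu_r\circ\Delta)_\ast$. The paper's approach is more elementary and reduces everything to Rosenlicht--Serre modulus theory on curves, exactly parallel to \cite{Gazaki2015}.
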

 We close this introduction with the following remark.
 
\begin{rem}
We note that for a semi-abelian variety $G$, the group $H_0^{\sing}(G)\otimes\Q$ has been previously studied by Sugiyama (\cite{Sugiyama2014}), who obtained a similar isomorphism for the Pontryagin filtration $\{I^r\otimes\Q\}_{r\geq 0}$ (see \autoref{Ir}). 

Our focus both in \cite{Gazaki2015} and in the current paper is an integral study of the subject, providing some evidence towards the Beilinson conjectures. 
In fact, it was recently established by Kahn and Yamazaki (\cite{KahnYamazaki2013}) that the Somekawa K-group $K(k;\mathcal{F}_1,\ldots,\mathcal{F}_r)$ attached to more general coordinates than just semi-abelian varieties (namely homotopy invariant Nisnevich sheaves with transfers) has the expected motivic  realization.
\footnote{What they showed is an isomorphism $K(K;G_1,\ldots,G_r)\stackrel{\simeq}{\longrightarrow} 
\Hom_{\DM^{\eff}}(\Z,\mathcal{F}_1[0]\otimes\cdots\otimes \mathcal{F}_r[0])$.} 
\end{rem}


\subsection{Acknowledgements} The author is truly grateful to Professors Shuji Saito and Jean-Louis Colliot-Th\'{e}l\`{e}ne, as well as the referee for doing a careful reading of the paper and providing very helpful feedback. The author would also like to heartily thank Professors Kazuya Kato and Bhargav Bhatt and Dr. Isabel Leal for useful discussions and suggestions.


\vspace{2pt}
\section{Zero-cycles on Abelian varieties}\label{section 2} Throughout this section we will be working with an abelian variety $A$  of dimension $d$ over a field $k$. Soon enough we will focus on the case of a $p$-adic base field. 

We start by reviewing the construction of the filtration $\{F^r CH_0(A)\}_{r\geq 0}$ of $CH_0(A)$ constructed in \cite{Gazaki2015}. 
\subsection{The Somekawa K-group}
First, we consider the Somekawa K-group $K(k;A_1,\ldots,A_r)$ attached to abelian varieties $A_1,\ldots, A_r$ over $k$. This group was first defined by Somekawa in \cite{Somekawa1990}. Since then this group and many of its variants have been used by many authors, obtaining numerous applications to zero-cycles and zero-cycles with modulus (see for example, \cite{Raskind/Spiess2000}, \cite{Yamazaki2005}, \cite{KahnYamazaki2013}, \cite{Ivorra/Ruelling2017}). In this section we won't need the precise definition of $K(k;A_1,\ldots,A_r)$. We only recall the fact that it is a quotient of a simpler group, namely of the Mackey product, $(A_1\otimes^M\cdots\otimes^M A_r)(k)$. The latter is defined as follows. 
\begin{defn}\label{Mackey} Let $A_1,\ldots, A_r$ be abelian varieties over a field $k$. The Mackey product ${A_1\otimes^M\cdots\otimes^M A_r}$ is defined at a finite extension $L$ over $k$ as follows: 
\[(A_1\otimes^M\cdots\otimes^M A_r)(L):=\left(\bigoplus_{F/L\text{ finite}} A_1(F)\otimes\cdots\otimes A_r(F)\right)/R_1.\] Here $R_1$ is the subgroup generated by elements of the form, 
\[\label{projectionformula} a_1\otimes\cdots\otimes \Tr_{F'/F}(a_i)\otimes\cdots\otimes a_r-\res_{F'/F}(a_1)\otimes\cdots\otimes a_i\otimes\cdots\otimes \res_{F'/F}(a_r) \in R_1, \] 
where $F'\supset F\supset L$ is a tower of finite extensions of $k$, $a_i\in A_i(F')$ for some $i\in\{1,\cdots,r\}$, $a_j\in A_j(F)$ for every $j\neq i$, $\Tr_{F'/F}:A_i(F')\rightarrow A_i(F)$ is the trace map on abelian varieties (often referred to as \textit{the norm}) and $\res_{F'/F}:A_j(F)\hookrightarrow A_j(F')$ is the usual restriction. 
\end{defn} 
The Somekawa K-group $K(k;A_1,\ldots,A_r)$ is a quotient $(A_1\otimes^M\cdots\otimes^M A_r)(k)/R_2$, where $R_2$ is a family of relations arising from function fields of curves (see \autoref{Kgeo} for a more general definition). 
\begin{notn} We will be  using the standard notation for the generators of  $K(k;A_1,\ldots,A_r)$, namely we will write them as symbols $\{a_1,\ldots,a_r\}_{F/L}$ for $a_i\in A_i(F)$. 
\end{notn} The symmetric K-group $S_r(k;A)$ that appeared in the isomorphism \eqref{isoab} is defined as the quotient of the Somekawa K-group $K(k;A,\cdots,A)$ attached to $r$ copies of $A$ by the action of the symmetric group in $r$ variables. For $r=0$ we define $S_0(k;A)=\Z$.  
Observe that in the group $S_r(k;A)$ we have an equality, 
\[\{x_1,\ldots,x_r\}_{L/k}=\{x_{\sigma(1)},\ldots, x_{\sigma(r)}\}_{L/k},\] for every permutation $\sigma$ of $\{1,\ldots,r\}$ and for points $x_i\in A(L)$, $i=1,\ldots,r$. 
\vspace{1pt}
\subsection{A filtration of $CH_0$} We are now ready to review the construction of $F^rCH_0(A)$. In \cite[Proposition 3.1]{Gazaki2015} we defined for every $r\geq 0$ a canonical homomorphism
\[\Phi_r:CH_0(A)\rightarrow S_r(k;A),\] with $\Phi_0=\deg$. We then defined (\cite[Definition 3.2]{Gazaki2015}) the filtration $F^r$ as follows,  
\[F^0=CH_0(A),\;\;\;\text{and for }r\geq 1,\;F^rCH_0(A):=\bigcap_{j=0}^{r-1}\ker(\Phi_j).\]
It follows by the definition that for every $r\geq 0$ we have an injection, 
\[\Phi_r: F^r/F^{r+1}\hookrightarrow S_r(k;A).\]
Moreover, we showed the following properties.
\begin{itemize}
\item Keeping the notation from the introduction, the group $F^1$ coincides with the degree zero subgroup, $A_0(A)$, while the group $F^2$ coincides with the Albanese kernel, $T(A)$. 
\item For every $r\geq 0$ we defined (\cite[Proposition 3.3]{Gazaki2015}) a canonical homomorphism,
\[\Psi_r:S_r(k;A)\rightarrow F^r/F^{r+1},\] with the property $\Phi_r\circ\Psi_r=\cdot r!$. In particular, the map $\Phi_r$ becomes an isomorphism after $\displaystyle\otimes\Z\left[\frac{1}{r!}\right]$.  
\end{itemize}
\subsection*{Torsion phenomena}\label{BlochBeauville} An important property of the filtration $\{F^r\}_{r\geq 0}$ is that it contains another well known filtration, $\{I^r\}_{r\geq 0}$, previously studied independently by Bloch (\cite{Bloch1976}) and Beauville (\cite{Beauville1986}). We briefly recall how the filtration $I^r$ is defined. 

Because $A$ is an abelian variety, the group $Z_0(A)$ of zero-cycles is a group ring under the Pontryagin product. This ring structure descends to $CH_0(A)$ and  for the classes of two closed points, $a,b\in A$ the Pontryagin product is defined as follows,
\[[a]\star[b]=[a+b].\]  The filtration $\{I^r\}_{r\geq 0}$ is defined by considering the powers of the augmentation ideal, \[I:=\langle\Tr_{k'/k}([a]-[0]_{k'}):a\in A(k')\rangle,\] of the group ring. Here we denoted by $\Tr_{k'/k}:CH_0(A\otimes_k k')\rightarrow CH_0(A)$ the pushforward map induced by $\Spec(k')\rightarrow\Spec(k)$, and by $[0]_k'$ the cycle class of $0\in A\otimes_k k'$. Bloch and Beauville showed that this filtration has the property $I^r\otimes\Q=0$ for $r>d$. 

In section 4 of \cite{Gazaki2015} we showed that the filtration $\{F^r\}$ satisfies the following properties. 
\begin{itemize}
\item $F^r\supset I^r$, for every $r\geq 0$. The inclusion becomes an equality when $r=0,1$.
\item $F^r\otimes\Q=I^r\otimes\Q$, for every $r\geq 0$. 
\end{itemize}
\begin{cor}\label{alpha} The subgroup $F^{d+1}$ of $CH_0(A)$ is torsion. 
\end{cor}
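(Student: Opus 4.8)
The plan is to deduce the claim from the two bullet points just recorded about the filtrations $\{F^r\}$ and $\{I^r\}$, together with the Bloch--Beauville vanishing $I^r\otimes\Q=0$ for $r>d$. First I would observe that applying the second bullet point with $r=d+1$ gives
\[
F^{d+1}\otimes\Q = I^{d+1}\otimes\Q.
\]
By the theorem of Bloch and Beauville quoted above, the right-hand side vanishes, since $d+1>d$. Hence $F^{d+1}\otimes\Q=0$.

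Next I would use the fact that $-\otimes\Q$ is exact (localization at the multiplicative set $\Z\setminus\{0\}$ is flat), so that for any abelian group $M$ one has $M\otimes\Q=0$ if and only if every element of $M$ is torsion, i.e. $M$ is a torsion group. Applying this with $M=F^{d+1}$ yields that $F^{d+1}$ is a torsion subgroup of $CH_0(A)$, which is exactly the assertion of the corollary.

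I do not expect any real obstacle here: the entire content is already packaged in the two displayed properties of the filtration together with the classical vanishing statement, and the only remaining ingredient is the elementary equivalence ``$M\otimes\Q=0 \iff M$ is torsion.'' The one point worth stating carefully is why $F^{d+1}\otimes\Q$ injects into (equivalently, equals) $I^{d+1}\otimes\Q$ rather than merely mapping to it; this is precisely the second bullet point $F^r\otimes\Q=I^r\otimes\Q$ cited from section~4 of \cite{Gazaki2015}, so it may be worth recalling in one line that this equality already incorporates the inclusion $I^r\subset F^r$ becoming an isomorphism rationally. Beyond that the argument is immediate.
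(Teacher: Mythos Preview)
Your proof is correct and matches the paper's own argument exactly: the paper simply says the claim ``follows directly from the above properties and the fact that $I^{d+1}\otimes\Q=0$,'' and you have merely unpacked this one-line justification. There is nothing to add.
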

\begin{proof} This follows directly from the above properties and the fact that $I^{d+1}\otimes\Q=0$. 

\end{proof}
\vspace{1pt}
\subsection{Proof of \autoref{main1}} For the remaining of this section we assume that the base field $k$ is a finite extension of $\Q_p$ with ring of integers $\mathcal{O}_k$ and residue field $\kappa$, and that $p$ is an odd prime. 
\begin{lem}\label{divisibility} Let $A$ be an abelian variety over $k$. The filtration $\{F^r\}_{r\geq 0}$ of $CH_0(A)$ has the following properties:
\begin{enumerate}
\item When $A$ has a mixture of good ordinary and split multiplicative reduction, the quotient $F^r/F^{r+1}$ is divisible for every $r\geq 3$.
\item When $A$ has good reduction, the quotient $F^2/F^3$ is the direct sum of a divisible group and a finite group. 
\end{enumerate}
\end{lem}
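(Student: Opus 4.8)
The plan is to reduce both statements to known structural results about the Somekawa $K$-groups $S_r(k;A)$ over a $p$-adic field, using the near-isomorphism $\Phi_r \colon F^r/F^{r+1} \hookrightarrow S_r(k;A)$ together with its one-sided inverse $\Psi_r$ satisfying $\Phi_r \circ \Psi_r = \cdot\, r!$. The key point is that $r!$ becomes invertible once we invert the relevant primes, so divisibility (or structural decomposition) of $S_r(k;A)$ transfers to $F^r/F^{r+1}$ up to $r!$-torsion; but $r!$-torsion is itself bounded torsion, which is harmless for claims like ``divisible plus finite'' or ``divisible''. Concretely, for part $(1)$ I would first show that $S_r(k;A)$ is divisible for every $r \geq 3$ when $A$ has good ordinary or split multiplicative reduction. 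The Somekawa group $S_r(k;A)$ is a quotient of the Mackey product $(A \otimes^M \cdots \otimes^M A)(k)$, whose generators are symbols $\{a_1,\dots,a_r\}_{L/k}$; divisibility is established by showing that each such symbol is infinitely divisible. This uses the fact that for $A$ with good ordinary or split multiplicative reduction over $k$, the group $A(L)$ (for $L/k$ finite) contains a finite-index subgroup isomorphic, via the formal group or the torus part, to a product of copies of $\mathcal{O}_L$ (good ordinary case, by the theory of the formal group of an ordinary abelian variety, cf. the logarithm) or of $\mathcal{O}_L^\times$-type factors (multiplicative case), and that the filtration $F^r$ for $r \geq 3$ only sees the ``deep'' part where these $p$-divisible or $\ell$-divisible structures dominate. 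I would invoke here the Mattuck-type structure $A(L) \cong \mathbb{Z}_p^{[L:\Q_p]\dim A} \oplus (\text{finite})$ and the analogous statement for tori, and use that the Mackey/Somekawa relations let one absorb multiplication by $n$ into one of the slots.

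For part $(2)$, the good reduction case and $r = 2$, I would argue that $S_2(k;A) = K(k;A,A)/(\text{symmetry})$ decomposes as a direct sum of a divisible group and a finite group. The input here is Raskind-Spiess / Somekawa-type computations of $K(k;A,A)$ over local fields: when $A$ has good reduction, one reduces modulo the maximal ideal and compares with the Somekawa group $K(\kappa; \mathcal{A}_\kappa, \mathcal{A}_\kappa)$ over the finite residue field $\kappa$, which is finite (being a quotient of $\mathcal{A}(\kappa) \otimes \mathcal{A}(\kappa)$, a finite group), while the kernel of reduction is built from the formal group and is divisible. More precisely, I would set up the exact sequence relating $K(k;A,A)$ to its analogue for the formal/reduction pieces, show the formal-group contribution is divisible (again via the $p$-adic logarithm and Mattuck's theorem, using $p \geq 3$ to avoid the $2$-torsion pathologies of the formal logarithm), and show the residual finite-field contribution is finite. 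Passing to the symmetric quotient preserves ``divisible plus finite''. Finally, transporting back along $\Phi_2$ and $\Psi_2$ with $2! = 2$ invertible on the divisible part (and the discrepancy being $2$-torsion, hence finite) yields that $F^2/F^3$ is divisible plus finite.

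The main obstacle I anticipate is the good-reduction, $r=2$ case: establishing that $K(k;A,A)$ genuinely splits as divisible $\oplus$ finite, rather than merely being an extension of a finite group by a divisible group. Extensions of a finite group by a divisible group need not split in general (e.g. $\Q_p/\Z_p$ issues versus $\Z_p$-module structure), so one must exhibit the finite summand explicitly — presumably as the torsion subgroup, after checking the divisible part is torsion-free-ish enough (e.g. a $\Z_p$-module of the form $\Z_p^N$ has no obstruction, but $\Q_p$-type summands could). The cleanest route is probably to show that the divisible part is a uniquely divisible (or at least cotorsion-free) group so that $\mathrm{Ext}^1$ of the finite quotient into it vanishes, which forces the splitting; this likely requires a careful analysis of the Mackey product modulo torsion and may lean on the Bloch-Kato-type description of $K(k;A,A)$ via Galois cohomology or $K$-theory. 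A secondary technical point is handling the passage between the integral filtration $F^\bullet$ and $S_r$ carefully enough that the $r!$-torsion fudge factor does not destroy the finiteness of the non-divisible summand — but since bounded torsion groups that are also finitely generated modulo divisible are finite, this should go through once the $S_r$ side is pinned down.
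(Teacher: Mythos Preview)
Your strategy---reduce to the structure of $S_r(k;A)$ via $\Phi_r$ and $\Psi_r$---is exactly the paper's, but you are making it harder than necessary. The structural results on the Somekawa groups are simply cited from Raskind--Spiess \cite[Theorems~3.5 and~4.5]{Raskind/Spiess2000} rather than reproved: $S_r(k;A)$ is divisible for $r\geq 3$ under the stated reduction hypotheses, and for $r=2$ with good reduction and $p>2$ it is $2$-divisible and decomposes as (divisible)$\oplus$(finite). The key observation you overlook is that $\Phi_r\circ\Psi_r=r!$ forces $\operatorname{im}\Phi_r\supseteq r!\cdot S_r(k;A)$, so as soon as $S_r(k;A)$ is $r!$-divisible the injection $\Phi_r$ is already an isomorphism \emph{integrally}; your ``$r!$-torsion fudge'' and the extension-splitting worry for $r=2$ both evaporate, and the structure of $F^r/F^{r+1}$ is read off verbatim from $S_r(k;A)$.
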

\begin{proof} Raskind and Spiess showed \cite[Theorem 4.5]{Raskind/Spiess2000} that for abelian varieties $A_1,\ldots,A_r$ over $k$ with a mixture of split multiplicative and good ordinary reduction, the Somekawa K-group, $K(k;A_1,\ldots,A_r)$ is divisible when $r\geq 3$ and it is the direct sum of a finite group and a divisible group when $r=2$. This implies that for $r\geq 3$, the injective homomorphism $\Phi_r:F^r/F^{r+1}\hookrightarrow S_r(k;A)$ is also surjective, because the image contains $r!S_r(k;A)$. This proves the first claim of the lemma.

Next assume that $r=2$. When $p>2$ and $A$ has good reduction, the Somekawa K-group $K(k;A,A)$ (and hence also $S_2(k;A)$) is $2$-divisible. This follows by \cite[Theorem 3.5]{Raskind/Spiess2000}. With a similar argument as above, the injection
\[\Phi_2:F^2/F^3\hookrightarrow S_2(k;A),\]  is an isomorphism.  

\end{proof}

We are now ready to prove our first theorem, which we restate here.
 \begin{theo} Let $k$ be a finite extension of the $p$-adic field $\Q_p$ with $p\geq 3$. Let $A$ be an abelian variety over $k$ with good ordinary reduction. The Albanese kernel, $T(A)$, of $A$ is the direct sum of its maximal divisible subgroup and a torsion group. 
\end{theo}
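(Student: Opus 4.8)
The plan is to produce an explicit divisible subgroup $D$ of $T(A)=F^{2}$ such that $T(A)/D$ is torsion; since every divisible subgroup of an abelian group is a direct summand, this at once gives $T(A)=D\oplus(T(A)/D)$, the desired decomposition. The subgroup $D$ will be assembled from the homomorphisms $\Psi_{r}$ of \cite[Proposition~3.3]{Gazaki2015}, used in the form of lifts $\widetilde\Psi_{r}\colon S_{r}(k;A)\to F^{r}CH_{0}(A)$ (the construction there --- via Pontryagin products of classes $[a_{i}]-[0]$ --- visibly takes values in $F^{r}CH_{0}(A)$, and $\Psi_{r}$ is its composite with the projection $F^{r}\twoheadrightarrow F^{r}/F^{r+1}$). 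We may assume $d:=\dim A\ge 2$, the cases $d\le 1$ being immediate since then $T(A)=F^{d+1}$ is torsion by \autoref{alpha}.

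For the graded pieces in degrees $r\ge 3$: since $A$ has good ordinary reduction, $S_{r}(k;A)$ is divisible for every $r\ge 3$ by the theorem of Raskind and Spiess recalled in the proof of \autoref{divisibility}. Hence multiplication by $r!$ is surjective on $S_{r}(k;A)$, and the identity $\Phi_{r}\circ\Psi_{r}=\cdot\,r!$ then forces $\Phi_{r}\colon F^{r}/F^{r+1}\hookrightarrow S_{r}(k;A)$ to be an isomorphism and $\Psi_{r}$ to be surjective. Consequently $\widetilde\Psi_{r}(S_{r}(k;A))$ is a divisible subgroup of $F^{r}$ with $F^{r}=\widetilde\Psi_{r}(S_{r}(k;A))+F^{r+1}$. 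A descending induction from $r=d$ to $r=3$ gives $F^{3}=D_{3}+F^{d+1}$ with $D_{3}:=\sum_{r=3}^{d}\widetilde\Psi_{r}(S_{r}(k;A))$, a finite sum of divisible subgroups of $F^{3}$ and hence divisible; therefore $F^{3}/D_{3}$ is a quotient of $F^{d+1}$, which is torsion by \autoref{alpha}.

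For the top piece, degree $2$, the hypothesis $p\ge 3$ is used: since $A$ has good reduction and $p\neq 2$, the group $S_{2}(k;A)$ is $2$-divisible (Raskind--Spiess, as in the proof of \autoref{divisibility}), so multiplication by $2$ is surjective on it, and $\Phi_{2}\circ\Psi_{2}=\cdot\,2$ makes $\Phi_{2}\colon F^{2}/F^{3}\to S_{2}(k;A)$ an isomorphism; by \autoref{divisibility}(2) this group is the direct sum of its maximal divisible subgroup $\overline{D}_{2}$ and a finite group. A short computation with $\Phi_{2}\circ\Psi_{2}=\cdot\,2$ and the $2$-divisibility of $\overline{D}_{2}$ shows that $\Psi_{2}$ restricted to the maximal divisible subgroup $D_{S_{2}}$ of $S_{2}(k;A)$ has image exactly $\overline{D}_{2}$; hence $D_{2}:=\widetilde\Psi_{2}(D_{S_{2}})$ is a divisible subgroup of $F^{2}$ whose image in $F^{2}/F^{3}$ is $\overline{D}_{2}$.

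Finally, set $D:=D_{3}+D_{2}\subseteq T(A)$, a divisible subgroup. The image of $D$ in $F^{2}/F^{3}$ contains $\overline{D}_{2}$, so the quotient of $F^{2}/F^{3}$ by that image is a quotient of $(F^{2}/F^{3})/\overline{D}_{2}$, hence finite; and the kernel of the induced surjection $T(A)/D\to(F^{2}/F^{3})/(\text{image of }D)$ equals $F^{3}/(F^{3}\cap D)$, which is a quotient of $F^{3}/D_{3}$ and hence torsion. Thus $T(A)/D$ is an extension of a finite group by a torsion group, hence torsion, and $T(A)=D\oplus(T(A)/D)$ is the required decomposition. The one step demanding care is the first: checking that the maps $\Psi_{r}$ of \cite{Gazaki2015} genuinely arise from homomorphisms $\widetilde\Psi_{r}\colon S_{r}(k;A)\to F^{r}CH_{0}(A)$, so that one produces divisible \emph{subgroups} of $CH_{0}(A)$ rather than only of the subquotients $F^{r}/F^{r+1}$; granting this, the theorem is a formal consequence of the divisibility statements in \autoref{divisibility} together with \autoref{alpha}.
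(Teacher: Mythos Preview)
The single step you flag as ``demanding care'' is in fact where the argument breaks: there is no group homomorphism $\widetilde{\Psi}_r\colon S_r(k;A)\to F^rCH_0(A)$ lifting $\Psi_r$. The obstruction is already multilinearity. Writing $\omega_{a_1,\ldots,a_r}=\Tr_{k'/k}\big(([a_1]-[0])\star\cdots\star([a_r]-[0])\big)$, one computes directly in $CH_0(A)$ that
\[
\omega_{a_1+a_1',\,a_2,\ldots,a_r}-\omega_{a_1,a_2,\ldots,a_r}-\omega_{a_1',a_2,\ldots,a_r}
=\omega_{a_1,a_1',a_2,\ldots,a_r}\in I^{r+1}\subset F^{r+1},
\]
which is in general nonzero in $F^r$. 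Hence the assignment $(a_1,\ldots,a_r)\mapsto\omega_{a_1,\ldots,a_r}$ does not descend to $A(k')^{\otimes r}$, let alone to the Mackey product or to $S_r(k;A)$, as a map into $F^r$; it only becomes multilinear after projecting to $F^r/F^{r+1}$, which is precisely why $\Psi_r$ is defined with that target in \cite{Gazaki2015}. Consequently the images $\widetilde{\Psi}_r(S_r(k;A))$ you need are not available as subgroups of $F^r$, and the divisible subgroups $D_2,D_3\subset T(A)$ on which your argument rests do not exist by this construction.

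The paper sidesteps this obstruction by never lifting into $F^r$. It first shows that $F^3/F^{d+1}$ is divisible: in each short exact sequence $0\to F^{r+1}/F^{d+1}\to F^r/F^{d+1}\to F^r/F^{r+1}\to 0$ the \emph{subgroup} is divisible by descending induction and hence splits off, while the quotient is divisible by \autoref{divisibility}. Then, using $F^{d+1}\subset\Tor(F^3)$, a comparison of the two short exact sequences $0\to F^{d+1}\to F^3\to F^3/F^{d+1}\to 0$ and $0\to\Tor(F^3)\to F^3\to F^3/\Tor(F^3)\to 0$ exhibits $F^3/\Tor(F^3)$ as a quotient of the divisible group $F^3/F^{d+1}$; the same diagram chase is repeated for $F^2$. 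The point is that the divisibility supplied by \autoref{divisibility} lives only in the associated graded, so one must argue with subquotients rather than attempt to manufacture divisible subgroups of $CH_0(A)$ directly.
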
 
\begin{proof}

We consider the filtration $\{F^r\}_{r\geq 0}$ of $CH_0(A)$ as above. Recall that $F^2=T(A)$ and the group $F^{d+1}$ is torsion. 

We first prove that the subgroup $F^3$ is the direct sum of a divisible group and a torsion group. Consider the short exact sequence of abelian groups,
\[0\rightarrow F^{d+1}\rightarrow F^3\rightarrow F^{3}/F^{d+1}\rightarrow 0.\] Since we assumed that the abelian variety $A$ has good ordinary reduction, part (1) of \autoref{divisibility} yields that the group $F^{3}/F^{d+1}$ has a filtration $F^{3}/F^{d+1}\supset F^{4}/F^{d+1}\supset\cdots\supset F^d/F^{d+1}\supset 0$ with each successive quotient divisible. Since divisible groups are injective $\Z$-modules, $F^{3}/F^{d+1}$ splits into a direct sum,
\[F^{3}/F^{d+1}\simeq\bigoplus_{i=3}^dF^{i}/F^{i+1},\] and it is therefore divisible. Let $\Tor(F^3)$ be the torsion subgroup of $F^3$. We have a decomposition $F^3\simeq\Tor(F^3)\oplus F^3/\Tor(F^3)$. By \autoref{alpha}, we obtain an injection $\alpha:F^{d+1}\hookrightarrow\Tor(F^3)$. We consider the commutative diagram with exact rows,
\[
	\begin{tikzcd} 
	0\ar[r]& F^{d+1}\ar{r}\ar{d}{\alpha} & F^3\ar{r}{\epsilon}\ar{d}{=} & F^3/F^{d+1}\ar{r}\ar{d}{\beta} &0\\
	0\ar{r} & \Tor(F^3)\ar{r} & F^3\ar{r}{\delta} & F^3/\Tor(F^3)\ar{r} & 0.
	\end{tikzcd}
	\] Note that the map $\beta$ is obtained by diagram chasing.  
	Because the map $\delta$ is surjective, it follows that $\beta$ is also surjective. Since $F^3/F^{d+1}$ is divisible, it follows that $F^3/\Tor(F^3)$ is divisible, which concludes the claim. 
	
We now repeat the above argument for the group $F^2$. We have a short exact sequence,
\[0\rightarrow F^3/F^{d+1}\rightarrow F^2/F^{d+1}\rightarrow F^2/F^{3}\rightarrow 0.\] Since $F^3/F^{d+1}$ is divisible, it is a direct summand of $F^2/F^{d+1}$. Part (2) of \autoref{divisibility} then yields that $F^2/F^{d+1}$ is the direct sum of a finite group and a divisible group. We will denote by $(F^2/F^{d+1})_{div}$ the maximal divisible subgroup. We consider the commutative diagram with exact rows,
\[
	\begin{tikzcd} 
	0\ar[r]& F^{d+1}\ar{r}\ar{d} & F^2\ar{r}{f}\ar{d}{=} & F^2/F^{d+1}\ar{r}\ar{d}{h} &0\\
	0\ar{r} & \Tor(F^2)\ar{r} & F^2\ar{r}{g} & F^2/\Tor(F^2)\ar{r} & 0.
	\end{tikzcd}
	\]
	By the same reasoning as above we obtain that the map $h$ is surjective. Since the group $F^2/\Tor(F^2)$ is torsion free, the finite summand of $F^2/F^{d+1}$ must map to zero under $h$ and therefore $h$ induces a surjection $(F^2/F^{d+1})_{div}\twoheadrightarrow F^2/\Tor(F^2)$. It yields that $F^2/\Tor(F^2)$ is a uniquely divisible group, which concludes the proof of the theorem.

\end{proof}
\begin{rem}\label{exponent}
Ideally we would like to at least bound the torsion of the non-divisible summand of $T(A)$. In \cite[Remark 4.6]{Gazaki2015} we suggested a possible strategy in order to bound the torsion in the group $I^{d+1}$, but we would additionally  need to control the quotient $F^{d+1}/I^{d+1}$. 
\end{rem}

\vspace{2pt}
\section{Semi-abelian Varieties and Suslin's singular homology}\label{section 3}
In this section our goal is to generalize the main result of \cite{Gazaki2015} to semi-abelian varieties. We start by reviewing the definition of Suslin's singular homology and some of its variants.  
\subsection{Suslin's singular homology} 
Let $X$ be a smooth quasi-projective variety over a field $k$. We denote by $X_{(1)}$ the set of all closed irreducible curves in $X$ and by $Z_{0}(X)$ the free abelian group of zero-cycles on $X$. Moreover, for $C\in X_{(1)}$, we denote $\widetilde{C}\stackrel{\pi}{\longrightarrow}C$ its normalization and $\iota:\widetilde{C}\hookrightarrow\overline{C}$ its smooth completion.
\begin{defn} For a smooth quasi-projective variety $X$ over a field $k$ we define Suslin's singular homology group, $H_{0}^{\sing}(X)$, as the quotient of $Z_{0}(X)$ modulo the subgroup generated by zero-cycles of the form $\iota_{0}^{\star}(Z)-\iota_{1}^{\star}(Z)$, where $\iota_{\lambda}:X\rightarrow X\times\mathbb{A}^{1}$ is the inclusion $x\rightarrow (x,\lambda)$, for $\lambda=0,1$, and $Z$ runs through all closed integral subvarieties of $X\times\mathbb{A}^{1}$ such that the projection $Z\rightarrow \mathbb{A}^{1}$ is finite and surjective.
\end{defn} The above definition occurs for the first time in the foundational paper \cite{Suslin/Voe} of Suslin and Voevodksy. This definition was later generalized by Schmidt (\cite{Schmidt2007}) to schemes of finite type over the spectrum of a Dedekind domain.

To construct our filtration we won't be working with the above definition but with a variant of this group, known as Wiesend's ideal class group. Before we review its definition, we remind the reader of the theory of generalized Jacobians for smooth complete curves, which motivated Wiesend's definition. 
\subsection*{Generalized Jacobians}
Let $C$ be a smooth curve over a perfect field $k$. Let $\overline{C}$ be its smooth compactification and let  $\displaystyle\mathfrak{m}=\sum_{P\in\overline{C}\setminus C}P$ be the reduced Cartier divisor on $\overline{C}$ supported on $\overline{C}\setminus C$. The divisor $\mathfrak{m}$ is usually called a modulus condition on $\overline{C}$. There exists a generalized Jacobian variety $J_{\mathfrak{m}}$ of $\overline{C}$ corresponding to the modulus $\mathfrak{m}$, which is a semi-abelian variety satisfying the following universal property. There is a morphism $\varphi:C\rightarrow J_{\mathfrak{m}}$ such that if $\psi:\overline{C}\dashrightarrow G$ is a rational map to some semi-abelian variety $G$ over $k$, which is regular on $C$, then  $\psi$ factors uniquely through $\varphi$. When $\overline{C}=C$, or alternatively $\mathfrak{m}=0$, $J_{\mathfrak{m}}$ coincides with the usual Jacobian variety $J$ of $\overline{C}$ and in the case $\overline{C}$ has a $k$-rational point, the Abel-Jacobi map gives an isomorphism
\[ Pic^{0}(\overline{C})\simeq \frac{Z_{0}(\overline{C})^{\deg=0}}{\langle\dv(f):f\in k(\overline{C})^{\times}\rangle}\stackrel{\simeq}{\longrightarrow}J(k).\]

When $C\subsetneq\overline{C}$ and $C(k)\neq\emptyset$, the analogous expression in terms of zero-cycles for the generalized Jacobian $J_{\mathfrak{m}}$, where $\mathfrak{m}=\sum_{P\in\overline{C}\setminus C}P$, is the following.
\[\frac{Z_{0}(C)^{\deg 0}}{\langle\dv(f):f\in k(\overline{C})^{\times}, f(P)=1, P\in \overline{C}\setminus C\rangle}\stackrel{\simeq}{\longrightarrow}J_{\mathfrak{m}}(k).\]

This is part of the more general theory of generalized Jacobians of Rosenlicht-Serre (\cite{Serre1988}). 
\vspace{2pt}
\subsection*{Wiesend's class group of zero-cycles}

\begin{defn}\label{Wi} For a smooth quasi-projective variety $X$ over a field $k$ we define the Wiesend tame ideal class group $W(X)$ to be the quotient of $Z_{0}(X)$,
by the subgroup generated by zero-cycles of the form $\{\dv(f):f\in k(C)^{\times}\}_{C\in X_{(1)}}$ with the function $f$ having the property  $f(P)=1$, for every $P\in\overline{C}-\widetilde{C}$.
\end{defn}
\begin{rem}
This definition is a direct generalization of the generalized Jacobian $J_{\mathfrak{m}}$ of a smooth complete curve corresponding to a reduced modulus $\mathfrak{m}$. We note that Wiesend's  \cite{Wiesend2007} original definition was not exactly this, but a similar class group for arithmetic schemes of finite type over $\Spec\mathbb{Z}$. \autoref{Wi} is a variant due to Yamazaki \cite{Yamazaki2013}.
\end{rem}

\subsection*{Properties}
\begin{enumerate}\item When the base field $k$ is perfect, it is a theorem of Schmidt \cite[Theorem 5.1]{Schmidt2007} that the groups $W(X)$ and $H_{0}^{\sing}(X)$ are isomorphic (see also \cite[Theorem 1]{Schmidtappendix}). From now on we assume that $k$ is perfect. In most of the statements we will be using $W(X)$, but we will be interchanging between the two definitions without further notice.
\item When $X$ is proper over $k$, the groups $CH_{0}(X)$ and $W(X)$ coincide.
\item It is clear that the degree map $\deg:Z_{0}(X)\rightarrow\mathbb{Z}$, $x\rightarrow[k(x):k]$, factors through $W(X)$. We will denote by $W^{0}(X)$ the subgroup of degree zero cycle classes.
\item When $\overline{C}$ is a smooth, complete, geometrically irreducible curve over $k$ and $S$ a finite set of points of $\overline{C}$, then for the smooth curve $C=\overline{C}-S$, the group $W(C)$ coincides with the group of classes of divisors on $\overline{C}$ prime to $S$ modulo $S$-equivalence, as defined in \cite[Chapter V]{Serre1988}. Notice that when $C$ has a $k$-rational point, the abelian group $W^{0}(C)$ is isomorphic to the generalized Jacobian of $\overline{C}$ corresponding to the modulus $\mathfrak{m}=\sum_{P\in S}P$.
\item The groups $W(X)$ and $H^{\sing}_0$ are covariant functorial for morphisms of varieties $X\rightarrow Y$ (\cite[Proposition 2.10]{Schmidt2007}, \cite[Lemma 2]{Wiesend2007}, \cite[lemma 2.3]{Yamazaki2013}).
\item \underline{Generalized Albanese map:} If $X$ is a smooth variety over a perfect field $k$, there is a generalized albanese map $\alb_{X}:W^{0}(X)\rightarrow G_{X}(k)$, where $G_{X}$ is the generalized Albanese variety of $X$. For a proof of the fact that the generalized Albanese map is well-defined we refer to \cite{Spiess/Szamuely2003}. In this article, T.Szamueli and M.Spiess prove the analogue of Roitman's theorem for Suslin's singular homology.
\end{enumerate}

\medskip
\subsection{The geometric K-group} Next we need to review the definition of the Somekawa K-group, $K(k;G_1,\ldots,G_r)$, attached to semi-abelian varieties $G_1,\ldots, G_r$ over $k$. In order to prove \autoref{isosemiab}, we will  use a more geometric variant, $K^{\geo}(k;G_1,\ldots,G_r)$, which was introduced by  Kahn and Yamazaki in \cite{KahnYamazaki2013}. We start with the following preliminary remark that will help us simplify the notation. 

\begin{rem}\label{Si} Let $K$ be a function field in one variable over $k$ and let $C$ be the smooth complete curve having function field $K$. Let $g_{i}\in G_{i}(K)$, for $i=1,\dots,r$. Each $g_{i}$ extends to a rational map $g_{i}:C\dashrightarrow G_{i}$, which is regular outside a finite set of places $S_{i}$ of $C$.  From now on we will refer to the set $S_i$ as \textit{the support of} $g_i$. Let $P$ be a closed point of $C$. If $K_{P}$ is the completion of $K$ at $P$ and $\mathcal{O}_{K_{P}}$ its ring of integers, we will denote by $\mathcal{O}_{P}$ the algebraic local ring, $K\cap\mathcal{O}_{K_{P}}$, and by $k(P)$ the residue field at $P$. Then the set $S_{i}$  is precisely the set \[S_{i}=\{P\in C:g_{i}\not\in G_{i}(\mathcal{O}_{P})\}.\] 
Moreover, Somekawa defined a \textit{tame symbol}, $\partial_P:G_i(K_P)\otimes K_P^\times\rightarrow G(k(P))$. This symbol is a direct generalization of the tame symbol of the multiplicative group, $\G_m$. We will not need the precise definition in what follows. For more details we refer to \cite[1.1]{Somekawa1990}
\end{rem}
\begin{defn}\label{Som} The Somekawa K-group $K(k;G_1,\ldots, G_r)$ attached to semi-abelian varieties $G_1, \ldots, G_r$ over a perfect field $k$ is defined as follows. \[\displaystyle K(k;G_1,\ldots, G_r):=(G_1\otimes^{M}\cdots\otimes^{M}G_r)(k)/R,\] where the subgroup $R$ is generated by the following family of elements.\\
\textbf{Weil reciprocity I:} Let $K$ be a function field in one variable over $k$ and let $C$ be the smooth complete curve having function field $K$.  Let $g_{i}\in G_i(K)$ for $i=1,\ldots, r$ be elements with disjoint supports and let $f\in K^\times$. Then for every closed point $P\in C$ there exists some $i(P)\in\{1,\ldots, r\}$ such that $P\not\in S_j$, for every $j\neq i$. We require,
    \[\sum_{P\in C}\{g_{1}(P),\ldots, \partial_{P}(g_{i(P)}\otimes f),\ldots, g_{r}(P)\}_{k(P)/k}\in R.\] 
    \end{defn}

\begin{defn}\label{Kgeo} Let $G_{1},\dots,G_{r}$ be semi-abelian varieties over a perfect field $k$. We define the geometric  K-group, $K^{\geo}(k;G_{1},\ldots,G_{r})$, attached to $G_{1},\ldots,G_{r}$ as follows. \[K^{\geo}(k;G_{1},\ldots,G_{r})=(G_{1}\otimes^{M}\dots\otimes^{M} G_{r})(k)/R_{0},\] where $R_{0}$ is the subgroup generated by the following family of elements:\\
\textbf{Weil reciprocity II:} Let $K$ be a function field in one variable over $k$ and let $C$ be the smooth complete curve having function field $K$. Let $g_{i}\in G_{i}(K)$, for $i=1,\dots,r$.  We consider the set $\displaystyle S=\bigcup_{i=1}^{r}S_{i}$. Let $f\in K^{\times}$ be a function such that $f(P)=1$, for every $P\in S$. Then we require \[\sum_{P\not\in S}\ord_{P}(f)\{g_{1}(P),\ldots, g_{r}(P)\}_{k(P)/k}\in R_{0}.\]
\end{defn} Recall that $\otimes^{M}$ is the product of Mackey functors defined in \autoref{projectionformula}. Kahn and Yamazaki proved that the group $K^{\geo}(k;G_1,\ldots,G_r)$ is isomorphic to Somekawa K-group $K(k;G_1,\ldots,G_r)$ (\cite[Theorem 11.12]{KahnYamazaki2013}), with this new variant being more suitable for geometric applications. 

\subsection{Proof of \autoref{isosemiab}}
Let $G$ be a semi-abelian variety over a perfect field $k$ of dimension $d$. We  will write the group law in $G$ in multiplicative notation with $1$ the neutral element.
\subsection*{The Pontryagin Filtration} Similarly to the case of abelian varieties, the group of zero-cycles $Z_{0}(G)$  becomes a group ring with multiplication given by the Pontryagin product, $\displaystyle(\sum_{j=1}^{s}n_{j}x_{j})\star(\sum_{i=1}^{t}m_{i}y_{i})=\sum_{i,j}n_{j}m_{i}x_{j}y_{i},$ for $x_{j},y_{i}$ closed points of $G$ and $n_{j},\;m_{i}$ integers.
\begin{lem} The subgroup $M=\langle\dv(f):f\in k(C)^{\times},C\in G_{(1)},\;f(x)=1, x\in\overline{C}-\widetilde{C}\rangle$ is an ideal of $Z_{0}(G)$ and therefore $W(G)$ becomes a ring with the Pontryagin product.
\end{lem}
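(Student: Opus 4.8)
The plan is to prove that $M$ is a (two‑sided) ideal of $(Z_{0}(G),\star)$; the ring structure on $W(G)$ then follows at once, since $W(G)=Z_{0}(G)/M$ and the Pontryagin product on $Z_{0}(G)$ is already associative, commutative and unital. Because $G$ is a commutative group variety, $\star$ is commutative, so it suffices to show $[y]\star\dv(f)\in M$ for every closed point $y\in G$ and every generator $\dv(f)$ of $M$, i.e.\ for $C\in G_{(1)}$, $f\in k(C)^{\times}$ with $f(x)=1$ for all $x\in\overline{C}\setminus\widetilde{C}$. The key device will be to factor the Pontryagin product as $\star=m_{\ast}\circ\boxtimes$, where $m\colon G\times_{k}G\to G$ is the group law and $\boxtimes\colon Z_{0}(G)\otimes Z_{0}(G)\to Z_{0}(G\times_{k}G)$ is the exterior product of zero‑cycles (so that $[a]\boxtimes[b]$ is the zero‑cycle associated to $\Spec(k(a)\otimes_{k}k(b))\xrightarrow{(a,b)}G\times_{k}G$, and $m_{\ast}\circ\boxtimes$ recovers $\star$ directly from its definition). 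Thus $[y]\star\dv(f)=m_{\ast}\big([y]\boxtimes\dv(f)\big)$, and it is enough to establish: (a) $[y]\boxtimes\dv(f)\in M(G\times_{k}G)$, and (b) $m_{\ast}$ carries $M(G\times_{k}G)$ into $M(G)$. Here, for a smooth $k$-variety $V$, I write $M(V)\subseteq Z_{0}(V)$ for the defining subgroup, so that $W(V)=Z_{0}(V)/M(V)$ and $M=M(G)$.

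Step (b) will be immediate from the covariant functoriality of $W$ recorded in the list of properties above: for any morphism $\phi\colon V\to V'$ of smooth $k$-varieties the pushforward of zero‑cycles descends to $W$, i.e.\ $\phi_{\ast}(M(V))\subseteq M(V')$; apply this to $\phi=m$. For step (a) I would put $L=k(y)$, which is a finite \emph{separable} extension of $k$ because $k$ is perfect, and introduce $\pi\colon G_{L}:=G\times_{k}\Spec L\to G$ (finite flat) together with $j:=\mathrm{id}_{G}\times_{k}y\colon G_{L}=G\times_{k}\Spec L\to G\times_{k}G$, which is a closed immersion because $\Spec L\xrightarrow{y}G$ is one; note that $G_{L}$ and $G\times_{k}G$ are smooth over $k$ (for $G_{L}$ because $L/k$ is separable). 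A direct inspection of fibres shows $j_{\ast}(\pi^{\ast}Z)=Z\boxtimes[y]$ for every $Z\in Z_{0}(G)$: indeed $\pi^{-1}(\overline{P})=\{\overline{P}\}\times_{k}\Spec L$, and $j$ sends its fundamental cycle to $\Spec(k(\overline{P})\otimes_{k}L)\xrightarrow{(\overline{P},y)}G\times_{k}G$, which is precisely the cycle defining $[\overline{P}]\boxtimes[y]$. In particular $[y]\boxtimes\dv(f)=j_{\ast}(\pi^{\ast}\dv(f))$ (up to the swap of the two factors of $G\times_{k}G$, which is harmless because $m$ is symmetric). It then remains to check $\pi^{\ast}\dv(f)\in M(G_{L})$: this cycle is the divisor of $\pi^{\ast}f$ on the curve $C\times_{k}\Spec L\subseteq G_{L}$, and since $L/k$ is separable, normalization and smooth completion commute with the base change $-\times_{k}L$, so $\overline{C\times_{k}L}\setminus\widetilde{C\times_{k}L}=(\overline{C}\setminus\widetilde{C})\times_{k}L$, on which $\pi^{\ast}f$ is identically $1$; decomposing $C\times_{k}L$ into its (integral) irreducible components then exhibits $\pi^{\ast}\dv(f)$ as a sum of generators of $M(G_{L})$. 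Hence $[y]\boxtimes\dv(f)=j_{\ast}(\pi^{\ast}\dv(f))\in j_{\ast}\big(M(G_{L})\big)\subseteq M(G\times_{k}G)$, once more by functoriality, now applied to $j$.

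Combining the two steps, $[y]\star\dv(f)=m_{\ast}\big([y]\boxtimes\dv(f)\big)\in m_{\ast}\big(M(G\times_{k}G)\big)\subseteq M(G)=M$. Since $Z_{0}(G)$ is generated by classes of closed points and $M$ by the cycles $\dv(f)$ above, this yields $Z_{0}(G)\star M\subseteq M$, and commutativity of $\star$ promotes this to a two‑sided ideal; therefore $W(G)=Z_{0}(G)/M$ inherits a (commutative, unital) ring structure from the Pontryagin product. The part I expect to demand the most care is step (a): correctly identifying the exterior product $Z\boxtimes[y]$ with $j_{\ast}\pi^{\ast}Z$ — that is, recognizing that Pontryagin‑multiplying a cycle by a closed point amounts to base change to its residue field — and verifying that the modulus condition $f\equiv 1$ along $\overline{C}\setminus\widetilde{C}$ persists under this base change, which is exactly where perfectness of $k$ enters (compatibility of normalization and smooth completion with a separable extension of the ground field).
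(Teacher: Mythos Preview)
Your proof is correct, and it rests on the same single input as the paper's—covariant functoriality of $W$ under pushforward—but you reach it by a different (and more careful) route. The paper's argument is a one-liner: for a closed point $x$ one writes $[x]\star\dv(f)=(\tau_{x})_{\ast}(\dv(f))$ for the translation $\tau_{x}\colon G\to G$, $y\mapsto xy$, and invokes functoriality once. You instead factor the Pontryagin product as $m_{\ast}\circ\boxtimes$, realize $[y]\boxtimes\dv(f)$ as $j_{\ast}\pi^{\ast}\dv(f)$ via base change to $L=k(y)$, and then apply functoriality twice (to $j$ and to $m$). The paper's version is much shorter; yours has the advantage of being explicitly rigorous when the closed point $y$ is not $k$-rational, since in that case ``translation by $y$'' is literally a morphism $G_{L}\to G_{L}$ rather than $G\to G$, and the paper's sentence tacitly hides exactly the base-change step that you spell out. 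The two arguments are related by the identity $m\circ j=\pi\circ\tau_{y}$ of morphisms $G_{L}\to G$, so your factorization through $G\times_{k}G$ is essentially an unpacking of the translation argument; what it buys you is a clean verification that the modulus condition $f\equiv 1$ along $\overline{C}\setminus\widetilde{C}$ survives the passage to $G_{L}$.
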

\begin{proof} It suffices to show that if $x\in G$ is any closed point of $G$ and $\dv(f)$ is a generator of $M$, then $x\star\dv(f)\in M$. We consider the translation map
\begin{eqnarray*}\tau_{x}:&&G\rightarrow G\\
&&y\rightarrow xy.
\end{eqnarray*} Then we observe that $x\star\dv(f)=\tau_{x\star}(\dv(f))$ and since $W(G)$ has covariant functoriality, we conclude that $x\star\dv(f)\in M$.

\end{proof}
Under this ring structure, the subgroup of degree zero elements, $W^{0}(G)$, becomes an ideal $I$ of $W(G)$. By taking its powers, we obtain the \textit{Pontryagin filtration} $\{I^r\}_{r\geq 0}$ of $W(G)$. In what follows we will need a precise description of the generators of $I^r$, which we include below. 
\begin{defn}\label{Ir} The filtration $I^r$ of $W(G)$ is defined as follows:
\begin{eqnarray*}&&I^{0}W(G)=W(G),\\
&&I^{1}W(G)=\langle Tr_{k'/k}([x]-[1]_{k'}):x\in G(k')\rangle =W^{0}(G),\\
&&I^{2}W(G)=\langle Tr_{k'/k}([xy]-[x]-[y]+[1]_{k'}):x,\;y\in G(k')\rangle,\\
&&I^{3}W(G)=\langle Tr_{k'/k}([xyz]-[xy]-[xz]-[yz]+[x]+[y]+[z]-[1]_{k'}):x,\;y,\;z\in G(k')\rangle,\\
&&\dots\\
&&I^{r}W(G)=\langle \sum_{j=0}^{r}(-1)^{r-j}\sum_{1\leq\nu_{1}<\dots<\nu_{j}\leq r}Tr_{k'/k}([x_{\nu_{1}}x_{\nu_{2}}\dots x_{\nu_{j}}]),\;
x_{i}\in G(k')\rangle,
\end{eqnarray*} where the summand corresponding to $j=0$ is $Tr_{k'/k}((-1)^{r}[1]_{k'})$.
\end{defn}
\medskip
\begin{notn} For points $x_{i}\in G(k')$,  $i\in\{1,\ldots,r\}$,  we will denote by $\omega_{x_{1},\dots,x_{r}}$ the element $\sum_{j=0}^{r}(-1)^{r-j}\sum_{1\leq\nu_{1}<\dots<\nu_{j}\leq r}Tr_{k'/k}([x_{\nu_{1}}x_{\nu_{2}}\dots x_{\nu_{j}}])$ of $I^r$.
\end{notn}
\medskip
\begin{rem} We will see that after $\otimes\mathbb{Q}$ the filtration $\{I^{r}W(G)\}_{r\geq 0}$ just defined has the property that $I^{r}/I^{r+1}\otimes\mathbb{Q}\simeq S_{r}(k;G)\otimes\mathbb{Q}$. This is exactly what was proved also by Sugiyama in \cite[Proposition 4.8]{Sugiyama2014}.
\end{rem}
\medskip
\subsection{Definition of the Filtration}
We start this subsection with the observation that $G$ is its own generalized Albanese variety. Therefore the Albanese map takes on the form $\alb_{G}:W^{0}(G)\rightarrow G(k)$. This follows by the next proposition, which is a direct consequence of a classical result of Rosenlicht-Serre (\cite{Serre1988}).
\begin{prop}\label{recG} For any semi-abelian variety $G$ over $k$, the natural map $j:G(k)\rightarrow K_{1}^{\geo}(k;G)$ is an isomorphism.
\end{prop}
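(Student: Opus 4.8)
The plan is to produce an explicit two‑sided inverse to $j$. Unwinding the definitions, $K_1^{\geo}(k;G)=(G)(k)/R_0$, where $(G)(k)=\bigl(\bigoplus_{F/k\text{ finite}}G(F)\bigr)/R_1$ is the Mackey product on the single functor $G$ — with $R_1$ generated by the trace relations $\Tr_{F'/F}(a)-a$ of \autoref{Mackey} specialised to $r=1$ — and $R_0$ is generated by the Weil reciprocity II relations of \autoref{Kgeo}.

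First I would dispatch surjectivity: for $a\in G(F)$ the relation $\Tr_{F/k}(a)-a\in R_1$ identifies the class of $a$ in $(G)(k)$, hence in $K_1^{\geo}(k;G)$, with $j(\Tr_{F/k}(a))$; since such classes generate, $j$ is onto. For injectivity I would build a retraction. Consider the homomorphism $N\colon\bigoplus_{F/k\text{ finite}}G(F)\to G(k)$ sending $a\in G(F)$ to its norm $\Tr_{F/k}(a)$ on the semi‑abelian variety $G$. Transitivity of the norm shows $N$ kills $R_1$, so $N$ descends to $(G)(k)$ and satisfies $N\circ j=\mathrm{id}_{G(k)}$. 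The one substantive point is that $N$ also kills the generators of $R_0$; granting this, $N$ descends to $\bar N\colon K_1^{\geo}(k;G)\to G(k)$ with $\bar N\circ j=\mathrm{id}$, and together with surjectivity this shows $j$ is an isomorphism.

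The heart of the matter is therefore the following reciprocity statement: given a function field $K/k$ in one variable with smooth complete model $C$, an element $g\in G(K)$ with support $S$ (\autoref{Si}), and $f\in K^\times$ with $f(P)=1$ for all $P\in S$, one has $\sum_{P\notin S}\ord_P(f)\,\Tr_{k(P)/k}(g(P))=0$ in $G(k)$. I would deduce this from the Rosenlicht–Serre theory of generalized Jacobians and local symbols (\cite[Ch.~III, V]{Serre1988}). The element $g$ gives a morphism $C\setminus S\to G$ extending to a rational map $C\dashrightarrow G$; since $G$ is semi‑abelian, i.e.\ its unipotent radical is trivial, the conductor of $g$ is a reduced divisor supported on $S$, so the reduced modulus $\mathfrak m=\sum_{P\in S}P$ bounds $g$ and $g$ factors through the canonical map $C\setminus S\to J_{\mathfrak m}$. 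The hypothesis $f(P)=1$ on $S$ says exactly that $f\equiv 1\pmod{\mathfrak m}$, so $\dv(f)$ is $\mathfrak m$‑equivalent to zero in $J_{\mathfrak m}$; pushing forward along $J_{\mathfrak m}\to G$ gives $\sum_{P\notin S}\ord_P(f)\,\Tr_{k(P)/k}(g(P))=0$. Equivalently, this is Serre's reciprocity $\sum_{P\in C}(g,f)_P=0$, where the local symbols $(g,f)_P$ vanish for $P\in S$ because $f\equiv1$ modulo the conductor, and equal $\ord_P(f)\,\Tr_{k(P)/k}(g(P))$ for $P\notin S$ where $g$ is regular.

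The main obstacle is precisely this last step, and within it the assertion that the reduced modulus $\mathfrak m=\sum_{P\in S}P$ already bounds $g$: this is exactly where semi‑abelianness of $G$ is used, since for a commutative group variety with a nontrivial unipotent part the conductor could carry higher multiplicities and the condition $f(P)=1$ on $S$ would be too weak. Everything else — transitivity of norms, covariant functoriality of the relevant maps, and the descent of $N$ through $R_1$ and $R_0$ — is formal.
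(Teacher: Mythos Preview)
Your proposal is correct and follows essentially the same route as the paper. Both arguments establish surjectivity via the trace relation $\{a\}_{F/k}=\{\Tr_{F/k}(a)\}_{k/k}$, and both reduce injectivity to the reciprocity statement $\sum_{P\notin S}\ord_P(f)\,\Tr_{k(P)/k}(g(P))=0$ in $G(k)$, deducing it from the Rosenlicht--Serre fact that the reduced modulus $\mathfrak m=\sum_{P\in S}P$ already bounds any rational map from a curve to a semi-abelian variety; the only cosmetic difference is that you package injectivity as the construction of an explicit retraction $N$, whereas the paper phrases it as ``Weil reciprocity already holds in $G(k)$'', and the paper pins down the reduced-modulus claim by citing \cite[Proposition~13]{Serre1988} for extensions by $\mathbb G_m$ and then reducing the general case via the coordinate projections of the torus.
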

\begin{proof} First we prove surjectivity. Let $k'/k$ be a finite extension and $x\in G(k')$. Notice that in $K_{1}^{\geo}(k;G)$ we have the following equality, $\{x\}_{k'/k}=\{\Tr_{k'/k}(x)\}_{k/k}$. We conclude that $j(\Tr_{k'/k}(x))=\{x\}_{k'/k}$ and hence surjectivity follows.

To prove injectivity, it suffices to show that the Weil reciprocity relation of $K_1^{\geo}(k)$ holds already in $G(k)$. Let $K$ be a function field in one variable over $k$ and $C$ be the   smooth complete curve with function field $K$. Let $g\in G(K)$ and $f\in K^{\times}$. According to  \autoref{Si}, we obtain a regular map $C-S\rightarrow G$, where $S=\{P\in C:g\not\in G(\mathcal{O}_{P})\}$.
 \cite[Theorem 1]{Serre1988} tells us that the map $g$ admits a modulus $\mathfrak{m}$.  Moreover, \cite[Proposition 13]{Serre1988} shows that in the case $G$ is an extension of an abelian variety $A$ by $\mathbb{G}_{m}$,  $\mathfrak{m}=\sum_{P\in S}P$ is a modulus for $g$. By a simple argument using the projections of $\mathbb{G}_{m}^{\oplus d}$ to each factor, we can conclude that $\mathfrak{m}=\sum_{P\in S}P$ is a modulus for $g$ when $G$ is an arbitrary semi-abelian variety. This means that for every function $f\in K^{\times}$ such that $f(P)=1$ for every $P\in S$, it holds $\displaystyle\sum_{P\not\in S}\ord_{P}(f)\Tr_{k(P)/k}(g(P))=0$. Notice that this implies that Weil reciprocity holds in $G(k)$.

\end{proof}
The next proposition is analogous to \cite[Proposition 3.1]{Gazaki2015}.
\begin{prop}\label{phi1} For every $r\geq 1$, there exists a well-defined abelian group homomorphism
\begin{eqnarray*}\Phi_{r}:&&W(G)\rightarrow S_{r}^{\geo}(k;G)\\
&&[x]\rightarrow\{x,x,\ldots,x\}_{k(x)/k}.
\end{eqnarray*} Furthermore for $r=0$, we define $S_{0}^{\geo}(k;G)=\mathbb{Z}$ and $\Phi_{0}$ to be the degree map.
\end{prop}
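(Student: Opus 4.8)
The plan is to define $\Phi_r$ in two stages. For $r\geq 1$, I would first define a homomorphism on the free abelian group $Z_0(G)$ by $[x]\mapsto\{x,\ldots,x\}_{k(x)/k}\in S_r^{\geo}(k;G)$; for $r=0$ I simply set $\Phi_0=\deg$, which factors through $W(G)$ because the degree map does. Since a homomorphism out of a free abelian group is determined by arbitrary values on a basis, the only real content for $r\geq 1$ is to check that this $\Phi_r$ annihilates the subgroup $M\subseteq Z_0(G)$ generated by the cycles $\dv(f)$ with $f\in k(C)^\times$, $C\in G_{(1)}$, and $f(P)=1$ for all $P\in\overline C\setminus\widetilde C$; once that is done, $\Phi_r$ descends to $W(G)$.

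To verify $\Phi_r(\dv(f))=0$, fix such a curve $C$ with normalization $\widetilde C\xrightarrow{\pi}C$ and smooth completion $\widetilde C\hookrightarrow\overline C$, and write $K=k(\overline C)=k(C)$. The composite $\widetilde C\xrightarrow{\pi}C\hookrightarrow G$ is a morphism, hence extends to a rational map $g\colon\overline C\dashrightarrow G$, that is, an element $g\in G(K)$; crucially, since $g$ is already regular on all of $\widetilde C$, its support $S=\{P\in\overline C: g\notin G(\mathcal O_P)\}$ in the sense of \autoref{Si} satisfies $S\subseteq\overline C\setminus\widetilde C$. Now I would apply Weil reciprocity II of \autoref{Kgeo} to the $r$-tuple $g_1=\cdots=g_r=g$ and the function $f$: the required hypothesis ``$f(P)=1$ for all $P\in S$'' holds because $S\subseteq\overline C\setminus\widetilde C$ and $f\equiv 1$ there by assumption, so $\sum_{P\notin S}\ord_P(f)\{g(P),\ldots,g(P)\}_{k(P)/k}\in R_0$ and hence vanishes in $K^{\geo}(k;G,\ldots,G)$, a fortiori in $S_r^{\geo}(k;G)$.

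It remains to identify this sum with $\Phi_r(\dv(f))$. For a closed point $P\in\widetilde C$, its image $x_P:=\pi(P)\in C\subseteq G$ is a closed point with $k(x_P)\subseteq k(P)$ of degree $d_P:=[k(P):k(x_P)]$, and $g(P)=\res_{k(P)/k(x_P)}(x_P)$. Applying the projection-formula relation $R_1$ of the Mackey product (\autoref{Mackey}) with all slots equal to $x_P$, together with $\Tr_{k(P)/k(x_P)}\circ\res_{k(P)/k(x_P)}=\cdot\, d_P$, yields $\{g(P),\ldots,g(P)\}_{k(P)/k}=d_P\{x_P,\ldots,x_P\}_{k(x_P)/k}=\Phi_r(d_P[x_P])$. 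On the other hand, since $f\equiv 1$ on $\overline C\setminus\widetilde C$ we have $\ord_P(f)=0$ there, so the above sum effectively runs over $P\in\widetilde C$, and $\dv(f)$, viewed as a zero-cycle on $G$ (computed on the normalization and pushed forward), equals $\sum_{P\in\widetilde C}d_P\ord_P(f)[x_P]$. Summing over $P$ then gives $\Phi_r(\dv(f))=\sum_{P\notin S}\ord_P(f)\{g(P),\ldots,g(P)\}_{k(P)/k}=0$, as required.

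The delicate point — and the reason the geometric K-group is the right target — is the compatibility of supports: Wiesend's defining condition ``$f=1$ on $\overline C\setminus\widetilde C$'' must be matched precisely to the condition ``$f=1$ on $S$'' of Weil reciprocity II, which works only because $S$ avoids $\widetilde C$. Note that the $g_i$ here all have the same, non-disjoint support, so Weil reciprocity I of \autoref{Som} does not directly apply; the passage to $K^{\geo}$ of Kahn--Yamazaki is exactly what makes the argument go through. The remaining ingredient, the residue-degree bookkeeping via $R_1$, is routine once one recalls that the cycle-theoretic pushforward $\pi_*$ introduces precisely the factors $d_P$ that are absorbed by the restriction maps in the symbols.
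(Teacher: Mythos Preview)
Your proof is correct and follows essentially the same route as the paper's: define $\Phi_r$ on $Z_0(G)$, take $g\in G(K)$ to be the morphism $\widetilde C\to C\hookrightarrow G$ viewed at the generic point, use the projection formula in the Mackey product to absorb the degree factors $[k(P):k(\pi(P))]$, and then recognize the resulting expression as an instance of Weil reciprocity II in $S_r^{\geo}(k;G)$. Your observation that $S\subseteq\overline C\setminus\widetilde C$ (rather than equality) together with $\ord_P(f)=0$ on the complement is in fact slightly more careful than the paper's claim that $S=\overline C\setminus\widetilde C$, and your closing remark about why $K^{\geo}$ rather than Somekawa's original $K$ is the natural target is a point the paper leaves implicit.
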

\begin{proof} Let $r>0$ be a positive integer. We define the map $\Phi_{r}:Z_{0}(G)\rightarrow S_{r}^{\geo}(k;G)$ first on the level of zero-cycles. Let $C\in G_{(1)}$ be a closed irreducible curve in $G$, let $\widetilde{C}\stackrel{\pi}{\longrightarrow}C$ be its normalization and $\iota:\widetilde{C}\hookrightarrow\overline{C}$ the smooth completion of $\widetilde{C}$. Let $f\in k(\overline{C})^{\times}$ be a function such that $f(P)=1$, for every $P\in \overline{C}-\widetilde{C}$. We need to show $\Phi_{r}(\pi_{\star}(\dv(f)))=0$. More precisely, we need to prove
\[\Phi_{r}(\sum_{x\in\widetilde{C}}\ord_{x}(f)[k(x):k(\pi(x))])=
\sum_{x\in\widetilde{C}}\ord_{x}(f)[k(x):k(\pi(x))]\{\pi(x),\ldots,\pi(x)\}_{k(\pi(x))/k}=0.\]
First we have the following equalities.
\begin{eqnarray*}&&\sum_{x\in\widetilde{C}}\ord_{x}(f)[k(x):k(\pi(x))]\{\pi(x),\ldots,\pi(x)\}_{k(\pi(x))/k}=\\
&&\sum_{x\in\widetilde{C}}\ord_{x}(f)\{[k(x):k(\pi(x))]\pi(x),\ldots,\pi(x)\}_{k(\pi(x))/k}=\\
&&\sum_{x\in\widetilde{C}}\ord_{x}(f)\{Tr_{k(x)/k(\pi(x))}(\res_{k(x)/k(\pi(x))}(\pi(x))),\ldots,\pi(x)\}_{k(\pi(x))/k}=\\
&&\sum_{x\in\widetilde{C}}\ord_{x}(f)\{\res_{k(x)/k(\pi(x))}(\pi(x)),\ldots,\res_{k(x)/k(\pi(x))}(\pi(x))\}_{k(x)/k}.
\end{eqnarray*}
Let $K=k(C)$ and consider the generic point inclusion $\eta:\Spec K\hookrightarrow \widetilde{C}$. We set $g=\pi\eta\in G(K)$ and observe that $S=\{x\in\widetilde{C}:g\not\in G(\mathcal{O}_{P})\}=\overline{C}-\widetilde{C}$. Then we can easily see that
\[\sum_{x\in\widetilde{C}}\ord_{x}(f)\{\res_{k(x)/k}(\pi(x)),\dots,\res_{k(x)/k}(\pi(x))\}_{k(x)/k}=\sum_{x\not\in S}\ord_{x}(f)\{g(x),\dots,g(x)\}_{k(x)/k}.\]
The result therefore  follows from
the reciprocity relation  of the group $S_{r}^{\geo}(k;G)$.

\end{proof}
We can now proceed to the definition of the filtration. First notice that the isomorphism obtained in \autoref{recG} yields an equality $\Phi_{1}|_{\ker(\Phi_{0})}=\alb_{G}$. This in turn implies that $\ker(\Phi_{0})\cap\ker(\Phi_{1})=\ker(\alb_{G})$.
\begin{defn} We define a decreasing filtration $\{F^{r}\}_{r\geq 0}$ of $W(G)$ with $F^{0}=W(G)$ and for $r\geq 1$, $\displaystyle F^{r}=\bigcap_{j=0}^{r-1}\ker\Phi_{j}$. In particular, $F^{1}=W^{0}(G)$ and $F^{2}=\ker(\alb_{G})$.
\end{defn}
\begin{prop}\label{Gr1} For every $r\geq 0$ we have inclusions $I^{r}\subset F^{r}$. Moreover, \[\Phi_{r}(\omega_{x_{1},\dots,x_{r}})=r!\{x_{1},\dots,x_{r}\}_{k(x)/k}.\]
\end{prop}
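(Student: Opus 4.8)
The plan is to follow the proof of the corresponding statement for abelian varieties, \cite[Proposition 3.3]{Gazaki2015}: the whole assertion reduces to one elementary identity, obtained by expanding Somekawa symbols by multilinearity and then performing a binomial cancellation. By \autoref{Ir} the group $I^r$ is generated by the elements $\omega_{x_1,\dots,x_r}$, for $k'/k$ finite and $x_i\in G(k')$ (this is visible directly from $\omega_{x_1,\dots,x_r}=\Tr_{k'/k}\bigl(([x_1]-[1])\star\cdots\star([x_r]-[1])\bigr)$ together with the projection formula for the Pontryagin product, which reduces any product of generators of $I^1$ to this shape). So it suffices to compute $\Phi_j(\omega_{x_1,\dots,x_r})$ for every $0\le j\le r$.

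First I would record how $\Phi_j$ acts on a transfer. For $y\in G(k')$ the zero-cycle $\Tr_{k'/k}([y])$ equals $[k':k(\bar y)]\,[\bar y]$, where $\bar y$ is the closed point of $G$ underlying $y$; hence $\Phi_j(\Tr_{k'/k}([y]))=[k':k(\bar y)]\{\bar y,\dots,\bar y\}_{k(\bar y)/k}$ ($j$ entries, $j\ge 1$), and applying the projection formula relation of $S_j^{\geo}(k;G)$ in one coordinate together with the identity $\Tr_{k'/k(\bar y)}\circ\res_{k'/k(\bar y)}=[k':k(\bar y)]$ on $G$ turns this into $\{y,\dots,y\}_{k'/k}$; for $j=0$, $\Phi_0=\deg$ and $\deg(\Tr_{k'/k}([y]))=[k':k]$. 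Feeding this into the definition of $\omega_{x_1,\dots,x_r}$ gives, for $j\ge 1$,
\[
\Phi_j(\omega_{x_1,\dots,x_r})=\sum_{l=0}^{r}(-1)^{r-l}\sum_{\substack{S\subseteq\{1,\dots,r\}\\ |S|=l}}\Bigl\{\textstyle\prod_{\nu\in S}x_\nu,\dots,\prod_{\nu\in S}x_\nu\Bigr\}_{k'/k},
\]
the $l=0$ term being $\{1,\dots,1\}_{k'/k}=0$ by multilinearity; the analogous expression for $\Phi_0$ has $[k':k]$ in place of each symbol.

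Next I would expand each inner symbol: since $G$ is written multiplicatively and the symbol is $\mathbb{Z}$-multilinear in each slot, $\bigl\{\prod_{\nu\in S}x_\nu,\dots\bigr\}_{k'/k}=\sum_{f\colon\{1,\dots,j\}\to S}\{x_{f(1)},\dots,x_{f(j)}\}_{k'/k}$. Interchanging summations, the symbol $\{x_{f(1)},\dots,x_{f(j)}\}_{k'/k}$ attached to $f\colon\{1,\dots,j\}\to\{1,\dots,r\}$ picks up the coefficient $\sum_{S\supseteq\img f}(-1)^{r-|S|}=(1-1)^{\,r-|\img f|}$, which is $0$ unless $|\img f|=r$. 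If $j<r$ this forces the whole sum to vanish (the $j=0$ case being the identical computation $[k':k](1-1)^r=0$), so $\Phi_j(\omega_{x_1,\dots,x_r})=0$ for $0\le j\le r-1$, i.e. $I^r\subseteq\bigcap_{j=0}^{r-1}\ker\Phi_j=F^r$. If $j=r$, only the bijections $f\in\mathfrak S_r$ survive, each with coefficient $1$, so $\Phi_r(\omega_{x_1,\dots,x_r})=\sum_{\sigma\in\mathfrak S_r}\{x_{\sigma(1)},\dots,x_{\sigma(r)}\}_{k'/k}=r!\,\{x_1,\dots,x_r\}_{k'/k}$, the last equality because $S_r^{\geo}(k;G)$ is the symmetric quotient.

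I expect the only subtle point to be the transfer bookkeeping — the identity $\Phi_j(\Tr_{k'/k}([y]))=\{y,\dots,y\}_{k'/k}$ and the reduction of $I^r$ to the generators $\omega_{x_1,\dots,x_r}$ — since the rest is the purely formal cancellation $(1-1)^{\,r-|\img f|}=0$, verbatim as in the abelian-variety case.
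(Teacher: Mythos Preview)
Your argument is correct and is precisely the approach the paper has in mind: the paper's proof is a one-paragraph sketch that refers back to \cite[Propositions 3.3--3.4]{Gazaki2015}, invoking the compatibility of $\Phi_j$ with transfers, multilinearity of the symbol, and a ``combinatorial counting'', which is exactly the binomial cancellation $(1-1)^{r-|\img f|}$ you spell out. Your treatment of the transfer identity $\Phi_j(\Tr_{k'/k}[y])=\{y,\dots,y\}_{k'/k}$ via the projection formula is equivalent to the paper's use of the commutativity $\Phi_{j}\circ\Tr_{k'/k}=\Tr_{k'/k}\circ\Phi_{j}^{k'}$.
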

\begin{proof}
This is analogous to \cite[Proposition 3.3, part of Proposition 3.4]{Gazaki2015}. For the inclusion $I^{r}\subset F^{r}$ we use the commutativity $\Phi_{r-1}\Tr_{k'k}=\Tr_{k'/k}\Phi_{r-1}^{k'}$ (here $\Phi_{r-1}^{k'}$ is the corresponding map defined over $\Spec(k')$), the multilinearity of the symbol in $S_{r}^{\geo}(k;G)$ and the fact that the map $\Phi_{r+1}$ is a group homomorphism. The second statement follows by a combinatorial counting.

\end{proof}
\begin{thm}\label{psi1} Let $r\geq 0$ be an integer. There is a well-defined abelian group homomorphism
\begin{eqnarray*}&&\Psi_{r}:S_{r}^{\geo}(k;G)\longrightarrow \frac{F^{r}W(G)}{F^{r+1}W(G)}\\
&&\{x_{1},\dots,x_{r}\}_{k'/k}\longrightarrow\omega_{x_{1},\dots,x_{r}}.
\end{eqnarray*} Moreover, the homomorphism $\Psi_{r}$ satisfies the property, $\Phi_{r}\circ\Psi_{r}=\cdot r!$ on $S_{r}(k;G)$. As a conclusion, after $\displaystyle\otimes\mathbb{Z}\left[\frac{1}{r!}\right]$ the map $\Phi_{r}$ becomes an isomorphism with inverse $\displaystyle\frac{1}{r!}\Psi_{r}$.
\end{thm}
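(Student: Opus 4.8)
The plan is to deduce everything formally from \autoref{Gr1} together with the definition of the filtration, so that no separate verification of multilinearity, of the Mackey relations $R_{1}$ of \autoref{Mackey}, of Weil reciprocity II, or of the symmetry relation is needed. The key structural fact is that, by construction, $F^{r+1}W(G)=F^{r}W(G)\cap\ker\Phi_{r}$, so $\Phi_{r}$ induces an \emph{injection}
\[\overline{\Phi}_{r}\colon \frac{F^{r}W(G)}{F^{r+1}W(G)}\hookrightarrow S_{r}^{\geo}(k;G),\]
while \autoref{Gr1} tells us that every element $\omega_{x_{1},\dots,x_{r}}$ (with $x_{i}\in G(k')$ for a finite extension $k'/k$) lies in $I^{r}W(G)\subseteq F^{r}W(G)$ and satisfies $\Phi_{r}(\omega_{x_{1},\dots,x_{r}})=r!\,\{x_{1},\dots,x_{r}\}_{k'/k}$.

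Concretely, I would let $L=\bigoplus_{k'/k\text{ finite}}\Z[G(k')^{r}]$ be the free abelian group on tuples $(x_{1},\dots,x_{r})_{k'}$, and consider two homomorphisms out of $L$: the canonical surjection $\mu\colon L\twoheadrightarrow S_{r}^{\geo}(k;G)$ sending $(x_{1},\dots,x_{r})_{k'}\mapsto\{x_{1},\dots,x_{r}\}_{k'/k}$ (obtained by composing $L\twoheadrightarrow\bigoplus_{k'}G(k')^{\otimes r}\twoheadrightarrow(G\otimes^{M}\cdots\otimes^{M}G)(k)\twoheadrightarrow K^{\geo}(k;G,\dots,G)\twoheadrightarrow S_{r}^{\geo}(k;G)$), and $\lambda\colon L\to F^{r}W(G)/F^{r+1}W(G)$ sending $(x_{1},\dots,x_{r})_{k'}\mapsto\omega_{x_{1},\dots,x_{r}}\bmod F^{r+1}W(G)$. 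The formula in \autoref{Gr1}, extended $\Z$-linearly, says precisely that $\overline{\Phi}_{r}\circ\lambda=r!\cdot\mu$. Hence $\ker\mu\subseteq\ker\lambda$: if $\mu(\ell)=0$ then $\overline{\Phi}_{r}(\lambda(\ell))=r!\,\mu(\ell)=0$, and since $\overline{\Phi}_{r}$ is injective, $\lambda(\ell)=0$. Therefore $\lambda$ descends through $\mu$ to a homomorphism $\Psi_{r}\colon S_{r}^{\geo}(k;G)\to F^{r}W(G)/F^{r+1}W(G)$ with $\Psi_{r}(\{x_{1},\dots,x_{r}\}_{k'/k})=\omega_{x_{1},\dots,x_{r}}$, and the identity $\Phi_{r}\circ\Psi_{r}=\cdot\,r!$ is then nothing but the reformulation of \autoref{Gr1} on generators.

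For the final assertion I would combine $\overline{\Phi}_{r}\circ\Psi_{r}=r!\cdot\mathrm{id}$ with the injectivity of $\overline{\Phi}_{r}$: after $\otimes\,\Z[\tfrac1{r!}]$ the map $\tfrac1{r!}\Psi_{r}$ is a section of $\overline{\Phi}_{r}$, so $\overline{\Phi}_{r}$ becomes surjective; being already injective (localization at $r!$ is exact), it is an isomorphism, and then necessarily $(\overline{\Phi}_{r})^{-1}=\tfrac1{r!}\Psi_{r}$. The case $r=0$ is the trivial statement $S_{0}^{\geo}(k;G)=\Z\xrightarrow{\ \sim\ }W(G)/W^{0}(G)$ realized by $\deg=\Phi_{0}$, with $\Psi_{0}(1)$ the class of $[1]$.

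I do not expect a genuine new obstacle at this stage: all the substantive content — that $\omega_{x_{1},\dots,x_{r}}$ is a well-defined class in $F^{r}W(G)$ and that applying $\Phi_{r}$ returns $r!$ times the Somekawa symbol — is already packaged into \autoref{Gr1}, which itself rests on the open-curve reciprocity built into \autoref{Kgeo} and on the combinatorics of the Pontryagin filtration of \autoref{Ir}. The only point requiring care is not to attempt to define $\Psi_{r}$ by a formula directly on symbols of $S_{r}^{\geo}(k;G)$: that route would force one to check the Mackey relations $R_{1}$, Weil reciprocity II and symmetry by hand, with the usual norm-versus-pushforward bookkeeping over open curves, whereas routing the definition through the free group $L$ and invoking the injectivity of $\overline{\Phi}_{r}$ makes every one of these checks an automatic consequence.
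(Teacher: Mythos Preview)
Your argument is correct and takes a genuinely different route from the paper. The paper verifies well-definedness directly: it first obtains $\Psi_r$ on the symmetric Mackey quotient by reference to \cite[Proposition~3.4]{Gazaki2015}, and then checks Weil reciprocity~II by a geometric argument --- for $g\in G(K)$ with support $S$ and $f\in K^\times$ with $f\equiv 1$ on $S$, the push-forward $g_\star\colon W(C\setminus S)\to W(G)$ sends $\dv(f)=0\in W(C\setminus S)$ to $0$, so $\sum_{P\notin S}\ord_P(f)\Tr_{k(P)/k}([g(P)])=0$ in $W(G)$; expanding $\omega_{g_1(P),\dots,g_r(P)}$ as an alternating sum of such traces then kills the full relation. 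You instead exploit that the filtration was \emph{designed} so that $\overline\Phi_r$ is injective on $F^r/F^{r+1}$: the single identity $\overline\Phi_r\circ\lambda=r!\cdot\mu$ already contained in \autoref{Gr1} forces $\ker\mu\subseteq\ker\lambda$, and every relation-check (multilinearity, projection formula, symmetry, Weil reciprocity~II) becomes automatic. Your route is shorter and entirely formal; the paper's makes visible the actual mechanism --- the modulus condition on $f$ is precisely what makes $\dv(f)$ vanish in the open-curve Wiesend class group --- which is the new ingredient over the projective case of \cite{Gazaki2015}.
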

\begin{proof} The first step is to obtain a well-defined map, for every $r\geq 0$,
\[\Psi_{r}:\frac{(\overbrace{G\otimes^{M}\cdots\otimes^{M}G}^{r})(k)}{<(x_{1}\otimes\ldots\otimes x_{r})_{k'/k}-
(x_{\sigma(1)}\otimes\ldots\otimes x_{\sigma(r)})_{k'/k}>}\longrightarrow \frac{F^{r}W(G)}{F^{r+1}W(G)}.\]  The argument is exactly the same as  the first two steps of \cite[Proposition 3.4]{Gazaki2015}.

We will now show that this map factors through $S_{r}^{\geo}(k;G)$. Let $C$ be a smooth complete curve over $k$ having function field $K$. Let $g\in G(K)$ and $S=\{P\in C:g\not\in G(\mathcal{O}_{P})\}$ be the support of $g$. Let $f\in K^{\times}$ be a function such that $f(P)=1$, for every $P\in S$. We need to show that \[\sum_{P\not\in S}\ord_{P}(f)\Tr_{k(P)/k}([g(P)])=0\] in $W(G)$.  Set $C_{0}=C-S$. Then $g$ induces a morphism
$g:C_{0}\rightarrow G$.  Since Wiesend's ideal class group is covariant functorial, we obtain a push forward
$g_{\star}:W(C_{0})\rightarrow W(G).$ By property (4) of Wiesend's class group, we have that the group $W(C_{0})$ is equal to the group of divisors on $C$ prime to $S$ modulo $S$-equivalence, and therefore $\dv(f)=0$ in $W(C_{0})$. This forces  \[g_{\star}(\dv(f))=\sum_{P\not\in S}\ord_{P}(f)\Tr_{k(P)/k}([g(P)])=0\in W(G).\]

\end{proof}

The above proposition concludes the proof of \autoref{isosemiab}. 
\medskip
\begin{rem} When the base field $k$ is algebraically closed \autoref{isosemiab} holds integrally, 
\[F^rW(G)/F^{r+1}W(G)\simeq S_r^{\geo}(k;G).\] For, the group $S_r^{\geo}(k;G)$ is divisible in this case, and hence the map $\Phi_r$ is surjective. 
\end{rem}

\vspace{10pt}

\bibliographystyle{amsalpha}

\bibliography{bibfile}

\end{document}